\documentclass{amsart}

\usepackage{amsmath,amssymb,mathrsfs}
\usepackage{stackengine,scalerel}

\parindent=0cm

\newcommand{\abs}[2][\empty]{\ifx#1\empty\left|#2\right|%
\else#1\vert #2 #1\vert\fi}
\newcommand{\approxin}{\mathrel{\mathop{\ooalign{\kern.15ex\raisebox{.1ex}{\scalebox{.85}{$\sim$}}\cr$\subset$\cr}}}}
\newcommand{\Cnt}[1][]{{\mathcal C}^{#1}}
\newcommand{\conv}{\star}
\newcommand{\Fin}{\mathop\mathrm{Fin}\nolimits}
\newcommand{\fourier}[1]{\widehat{#1}}
\DeclareMathOperator{\st}{st}
\newcommand{\test}{\Cnt[\infty]_c}
\stackMath
\def\ster#1{\leavevmode\ThisStyle{%
\def\stackalignment{r}\def\stacktype{L}%
  \mkern-4mu\stackon[0pt]{\SavedStyle\phantom{f}#1}  
    {\SavedStyle^*\mkern-1.7mu\phantom{#1}}%
}}

\def\N{\mathbb  N}
\def\Q{\mathbb  Q}
\def\R{\mathbb  R}

\theoremstyle{plain}
\newtheorem{theorem}{Theorem}
\newtheorem{corollary}[theorem]{Corollary}
\newtheorem{definition}[theorem]{Definition}
\newtheorem{lemma}[theorem]{Lemma}

\theoremstyle{definition}
\newtheorem{example}[theorem]{Example}
\newtheorem{remark}[theorem]{Remark}

\begin{document}
\title{Generalized functions with infinitesimals}
\author[H.\ Vernaeve]{Hans Vernaeve}
\address{Department of Mathematics: Analysis, Logic and Discrete mathematics, Ghent University, Krijgslaan 297, B 9000 Gent, Belgium}
\email{hans.vernaeve@ugent.be}

\keywords{distribution theory, Schwartz distributions, generalized functions, nonstandard analysis}

\subjclass[2010]{26E35, 46F05, 46S20}

\begin{abstract}
We give a survey of the use of infinitesimals within mathematical analysis to rigorously deal with the delta-function from physics, and more generally, with distributions in the sense of L.\ Schwartz. We use the framework of nonstandard analysis as introduced by A.\ Robinson to rigorously deal with infinitesimals. Our exposition tries to be elementary, except for familiarity with nonstandard analysis, and takes as a starting point one of the basic questions in PDE for which distribution theory was developed. No knowledge of distribution theory is required.
\end{abstract}

\maketitle

\begin{quote}
To Chris Impens, who knew all along that distributions can be more intuitively described with infinitesimals.
\end{quote}

\section{Introduction}
Starting as early as 1822 with the work of J.\ Fourier, infinitesimal and infinitely large quantities have been used in mathematical analysis to define functions with delta function behaviour \cite[\S 5.1]{Laugwitz89}. One century later, infinitesimal quantities have been banned from analysis because it was unknown at the time how to use them rigorously. At the same time, the physicist P.\ Dirac coined the notion of delta function as a tool in quantum mechanics \cite{Dirac}.
Mathematicians like S.\ Sobolev and L.\ Schwartz subsequently introduced ways to rigorously define the delta function and weak solutions for partial differential equations (PDE). (Both concepts are related through the concept of a fundamental solution.) Their method is more abstract, defining the delta function and fundamental solutions of PDE as continuous linear functionals on suitable function spaces \cite{Schwartz}.\\
It can be safely assumed that Schwartz was not led to his definitions by an intuition with infinitesimals, but rather by an intellectually demanding process of redefining nonrigorously defined objects by extracting enough abstract properties to describe them, which fits the style of the Bourbaki group to which he belonged. As Schwartz describes in his biography \cite[pp.\ 218, 230, 232]{Schwartz01}:
\begin{quote}
\emph{[T]hose formulas [involving the Dirac $\delta$ function] were so crazy from the mathematical point of view that there was simply no question of accepting them. It didn't even seem possible to conceive of a justification.} (...)\\
\emph{I quickly understood that I had come upon everything I had been searching for for more than ten years.} (...) \emph{Functions were operators, but there were many operators which were not functions, such as $\delta$ and $\delta'$. The mystery of the Dirac $\delta$ function and its derivatives was solved.} (...) \emph{I always called the night of my discovery [of distributions] a marvelous night, the most beautiful night of my life.}
\end{quote}
The mathematical community completely accepted this theory and considered it a significant achievement, for which Schwartz was awarded a Fields Medal in 1950:
\begin{quote}
\emph{Developed the theory of distributions, a new notion of generalized function motivated by the Dirac delta-function of theoretical physics.}\\
(The IMU awarding the Fields Medals, about L.\ Schwartz)
\end{quote}

Finally, in the early 1960s, after centuries of controversy, A.\ Robinson found a way to rigorously deal with infinitesimals and infinitely large quantities, which he called \emph{nonstandard analysis}. Already in his 1966 book \cite{Robinson}, Robinson included a section on Schwartz distributions, representing them by ordinary functions defined on $\ster\R$, the extension of the real line $\R$ with infinitesimal and infinitely large quantities. The idea is quite intuitive and close to the historical development, and avoids the relatively heavy functional analysis required in Schwartz's theory, which makes a modern first course in distribution theory spend a large amount of time introducing---often without much motivation---the necessary function spaces, their duals, and their topological properties, and by extending a number of familiar operations on functions to distributions. It led historian D.\ Laugwitz to write in retrospect about Schwartz's theory \cite[p.169]{Laugwitz86}:
\begin{quote}
\emph{Die Distributionen erscheinen hier} (\dots) 
\emph{eher als Ersatzkonstruktionen f\"ur das Infinitesimale.}\\
(The distributions appear here rather as replacement constructions for infinitesimals.)
\end{quote}

Since then, several texts on distributions with infinitesimals have been published. On one hand, there are a number of elementary, often didactical texts representing the delta function as a function defined on $\ster\R$, but hardly developing any theory with them. On the other hand, there are a number of texts which show that any distribution can be represented as a nonstandard function, and leave the significance of this result to the reader's knowledge of distribution theory \cite{Robinson},\cite[Ch.~6]{Hoskins-Pinto}. In \cite{SL}, a significant body of abstract functional analysis has been treated within nonstandard analysis, and Schwartz's approach to distribution theory can be considered a special case of this theory (\cite[\S 10.4]{SL} calls their own account of distribution theory a sketch). This approach puts a heavy burden on the reader.

The current exposition intends to fill this gap by showing from scratch how nonstandard functions can be used to solve problems in the spirit of distribution theory, gradually discovering a function space which, after a suitable identification, turns out to be isomorphic to the space of distributions, even though we don't need the latter fact to develop the theory. In retrospect, one could imagine that this approach would have been the natural approach, had Robinson's discovery been made half a century earlier. The way in which distributions are represented in nonstandard analysis sometimes differs between different texts, but e.g., the approaches in \cite{Hoskins-Pinto} and \cite{SL} turn out to be equivalent \cite{Vernaeve}. Our definitions will be close to those in \cite{SL}.

Finally, we point out that distributions may be defined in several ways, but that the resulting theories nevertheless may be different in their technical requirements and their ease of use. In this respect, it is interesting to mention that Schwartz initially defined distributions as continuous linear operators $\Cnt[\infty]_c\to\Cnt[\infty]$, with a regular distribution $f$ acting as the convolution operator $\phi\mapsto f\conv \phi$ \cite[p.\ 230]{Schwartz01}, and only later simplified it to the theory that is in use today.

\section{Preliminaries}
We use multi-index notation for partial derivatives of higher order, i.e., for $\alpha=(\alpha_1,\dots,\alpha_d)\in\N^d$, we denote $\partial^\alpha f = \partial_1^{\alpha_1}\cdots\partial_d^{\alpha_d} f$,  $|\alpha| := \alpha_1+\dots+\alpha_d$ and $x^\alpha= x_1^{\alpha_1}\cdots x_d^{\alpha_d}$.

For $k\in\N\cup\{\infty\}$, we denote by $\Cnt[k]=\Cnt[k](\R^d)$ the set of all maps $\R^d\to\R$ with continuous partial derivatives up to order $k$ and by $\Cnt[k]_c=\Cnt[k]_c(\R^d)$ the set of all such maps with compact support. We denote $\Cnt:= \Cnt[0]$.

We denote the convolution product of $f$, $g \in \Cnt$ by
\[(f\conv g)(x):= \int_{\R^d} f(x-y)g(y)\,dy\]
if this integral converges (e.g., if $f$ or $g$ has compact support).

We use A.~Robinson's framework of nonstandard analysis \cite{Robinson,SL} (see also \cite{Goldblatt} for a gentle introduction), but the theory can equally well be phrased in E.~Nelson's IST framework of nonstandard analysis. We will occasionally give references to texts in IST, if a result is not readily available in the literature in Robinson's framework. Although both frameworks are essentially equivalent in practice, the reader should be warned that notations in both frameworks are different.

We denote $x\approx 0$ for an infinitesimal element $x\in\ster\R^d$, and $x\approx y$ iff $x-y\approx 0$. We denote $\Fin(\ster\R^d)$ for the set of finite elements in $\ster\R^d$, i.e.\ those bounded in norm by some real number, and denote $\ster\R^d_\infty := \ster\R^d\setminus \Fin(\ster\R^d)$. The standard part $\st x$ of $x\in \Fin(\ster\R^d)$ is the unique element of $\R^d$ such that $x\approx\st x$. We also write $x\approxin A$ as an abbreviation for $x\approx y$, for some $y\in A$. Notice that $x\in\Fin(\ster\R^d)$ iff $x\approxin\R^d$.

We call $f$: $\ster\R^d\to \ster\R$ $S$-continuous at $x\in\ster\R^d$ if $f(y)\approx f(x)$, $\forall y\approx x$. If $f$ is $S$-continuous on $\R^d$ and $f(x)\approxin\R$, $\forall x\in \R^d$, then there exists a unique $g\in \Cnt$ such that $f(x)\approx g(x)$, $\forall x\approxin \R^d$, which is called the standard part $\st f$ of $f$ \cite[\S 4.3]{Robert}, \cite[App.\ 2]{Oberguggenberger88} (cf.\ \cite[Thm.\ 4.5.10]{Robinson}).

For a standard function $f$: $\R^d\to\R$, the map $\ster f$: $\ster\R^d\to\ster\R$ is an extension of $f$. As is customary, we will also write $f$ for this extension. With the same abuse of notation, we consider $\Cnt[k]\subseteq\ster{\Cnt[k]}$. Similarly, we write $\partial^\alpha:= \ster\partial^\alpha$ and $\int:=\ster\int$.

We call \(f\): \(\ster\R^d\to\ster\R\) \(\ast\)-compactly (resp.\ S-compactly) supported if
\[f(x) = 0, \ \forall x\in \ster\R^d\text{ with }\abs x\ge R\]
for some $R\in\ster\R$ (resp.\ some $R\in\R$). By overspill, an internal $f$ is S-compactly supported iff $f(x)=0$ for each $x\in\ster\R^d_\infty$. By transfer, $\ster{\Cnt[k]_c}$ consists of all \(\ast\)-compactly supported functions in $\ster{\Cnt[k]}$.

Notice that $f$: $\ster\R^d\to\ster\R$ is S-continuous on $\R^d$ iff $f$ is S-continuous on $\Fin(\ster\R^d)$. If $f$ is also S-compactly supported, then $f$ is S-continuous on $\ster\R^d$.

\section{Motivating problem}
We use the problem to find a solution of a linear PDE by convolution, once a fundamental solution is known, as a motivation to develop our theory. The problem can be formulated as the following theorem in distribution theory:

\begin{theorem}
Consider the $m$-th order linear constant coefficient PDO $P(\partial): = \sum_{\abs\alpha\le m} c_\alpha \partial^\alpha$ (with $\alpha\in\N^d$, $c_\alpha\in\R$). Let $E$ be a fundamental solution for $P(\partial)$, i.e., $P(\partial) E = \delta$. Let $f\in\Cnt[\infty]_c(\R^d)$. Then
$u:=E\conv f\in\Cnt[\infty](\R^d)$ solves $P(\partial) u= f$ (in the classical sense).
\end{theorem}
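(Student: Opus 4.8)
The plan is to reduce the statement to three standard facts about convolving a distribution with a test function, applied to the hypothesis $P(\partial)E=\delta$. First I would record the regularization statement: for a distribution $E$ and $f\in\test$, the convolution $u:=E\conv f$ is the function $x\mapsto\langle E, f(x-\cdot)\rangle$, and $u\in\Cnt[\infty](\R^d)$. This is proved by noting that $x\mapsto f(x-\cdot)$ is an infinitely differentiable map from $\R^d$ into $\test$, with the $x$-derivative $\partial^\alpha$ corresponding to $(\partial^\alpha f)(x-\cdot)$, so the continuous linear functional $E$ may be differentiated under the bracket; this already yields $u\in\Cnt[\infty](\R^d)$ together with the commutation identity $\partial^\alpha u=E\conv(\partial^\alpha f)$.

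Second, I would establish $\partial^\alpha u=(\partial^\alpha E)\conv f$, where $\partial^\alpha E$ denotes the distributional derivative: starting from $\partial^\alpha u(x)=\langle E(y),\partial_x^\alpha f(x-y)\rangle$, the identity $\partial_x^\alpha f(x-y)=(-1)^{|\alpha|}\partial_y^\alpha[f(x-y)]$ and the defining property of $\partial^\alpha E$ produce two sign changes that cancel, leaving $\langle\partial^\alpha E(y),f(x-y)\rangle$. Summing these over $|\alpha|\le m$ with coefficients $c_\alpha$ gives
\[P(\partial)u=\sum_{|\alpha|\le m}c_\alpha\,\partial^\alpha u=\Big(\sum_{|\alpha|\le m}c_\alpha\,\partial^\alpha E\Big)\conv f=\big(P(\partial)E\big)\conv f=\delta\conv f.\]
Finally $\delta\conv f=f$, since $(\delta\conv f)(x)=\langle\delta,f(x-\cdot)\rangle=f(x)$; hence $P(\partial)u=f$ pointwise, and as $u\in\Cnt[\infty](\R^d)$ this is the classical equation.

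The step I expect to be the real obstacle is not any single computation but the foundational cost of the three facts above: each relies on the topology of $\test$, the continuity of $E$ as a functional, the smoothness of the translation map $x\mapsto f(x-\cdot)$, and the definition of the distributional derivative by duality. The statement as written cannot be reached without the functional-analytic apparatus of distribution theory---which is precisely the motivation, flagged in the introduction, for the nonstandard reformulation developed in the sequel, where $E$ will instead be a genuine function on $\ster\R^d$, the convolution $E\conv f$ an ordinary integral, and these facts become transfers of classical identities for smooth functions combined with a standard-part argument.
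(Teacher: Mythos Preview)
Your argument is the correct classical proof via Schwartz distribution theory, and you have accurately identified in your final paragraph that this is \emph{not} what the paper does. In the paper, Theorem~1 is stated only as the motivating problem; its ``proof'' is the entire nonstandard development culminating in Theorem~\ref{main-thm} (and, for the order-$0$ case, Theorem~\ref{fund-opl-sterke-opl}). There $E$ is taken to be a genuine internal function in $D'\subseteq\ster{\Cnt[\infty]}$, the convolution $E\conv f$ is an ordinary nonstandard integral, and the key steps are: (i) $E\conv f\approxin\Cnt[\infty]$ by Theorem~\ref{conv-in-D'}, (ii) $P(\partial)(E\conv f)=(P(\partial)E)\conv f\approx_{D'} f$ by transfer of the classical commutation identity and Lemma~\ref{delta-conv}, and (iii) passage to the standard part via Corollary~\ref{struct-Fin-ster-D} to obtain a classical $\Cnt[\infty]$ solution.

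What each approach buys: your route is short once the machinery (topology of $\test$, continuity of $E$, smoothness of $x\mapsto f(x-\cdot)$, distributional derivative by duality) is in place, but that machinery is exactly the ``relatively heavy functional analysis'' the paper sets out to avoid. The paper's route replaces all of it by transfer and overspill on internal $\Cnt[\infty]$ functions, so that the commutation $P(\partial)(E\conv f)=(P(\partial)E)\conv f$ is literally the classical smooth identity, and the only new ingredient is the standard-part argument. Your closing paragraph shows you already see this trade-off clearly.
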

Once the distributional framework is available, the theorem follows easily. We will prove an equivalent nonstandard version of this theorem. This will allow us to introduce definitions in a way directly motivated to solve the problem.\\
In the sequel, $P(\partial)$ will be an operator defined as in Thm.~1.

\section{Delta functions}
Dirac describes his intuition about his delta function as follows \cite[\S 15]{Dirac}:
\begin{quote}
\emph{To get a picture of $\delta(x)$, take a function of the real variable $x$ which vanishes everywhere except inside a small domain, of length $\varepsilon$ say, surrounding the origin $x = 0$, and which is so large inside this domain that its integral over this domain is unity.} (\dots) \emph{Then in the limit $\varepsilon \to 0$ this function will go over into $\delta(x)$.}
\end{quote}
It was clear to Dirac that there isn't any function $\R^d\to\R$ satisfying the requirements of his idealized delta function \cite[\S 15]{Dirac}:
\begin{quote}
\emph{$\delta(x)$ is not a function of $x$ according to the usual definition of a function}
\end{quote}
but functions $\ster\R^d\to\ster\R$ can satisfy Dirac's intuition where we choose $\varepsilon$ infinitesimal. As we want to be able to differentiate delta functions, we seek such functions in the space \(\ster{\Cnt[\infty]}\).

\begin{definition}
Let \(\psi\in\test\) with \(\int_{\R^d}\psi = 1\). Let \(\rho\approx 0\), \(\rho >0\). Then we call the map \(\delta\): \(\ster\R^d\to\ster\R\): \(\delta(x):= \frac{1}{\rho^d}\psi\big(\frac{x}\rho\big)\) a (nonstandard) model delta function.\footnote{This name is reminiscent of the notion of a model delta net \cite[Def.\ 7.9]{Oberguggenberger}.}
\end{definition}
By transfer, $\delta\in\ster{\Cnt[\infty]}$ and $\delta(x) = 0$ for each $x\in\ster\R^d$ with $x\not\approx 0$.

We now show the basic property of a delta function:
\begin{theorem}\label{model-delta}\label{ns-delta}
Let \(\delta\) be a model delta function. If \(f\in\ster{\Cnt}\) and \(f\) is S-continuous in \(0\) (in particular, if \(f\in \Cnt\)), then
\[\int_{\ster\R^d} f\delta \approx f(0).\]
\end{theorem}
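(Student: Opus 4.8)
The plan is to reduce the $\ast$-integral to one over the (standard, compact) support of $\psi$ by the change of variables $x=\rho y$, and then to exploit S-continuity of $f$ at $0$ together with $\int_{\R^d}\psi=1$.

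First I would note that if $\operatorname{supp}\psi\subseteq\{\abs y\le R\}$ for some standard $R>0$, then $\delta(x)=0$ whenever $\abs x\ge\rho R$, so $\delta$ is $\ast$-compactly supported and $\int_{\ster\R^d}f\delta$ exists by transfer. Transferring the standard identity $\int_{\R^d}g(x)\rho^{-d}h(x/\rho)\,dx=\int_{\R^d}g(\rho y)h(y)\,dy$ (valid for $g\in\Cnt$, $h\in\test$ and $\rho>0$) and applying it with $g:=f$, $h:=\psi$ yields
\[\int_{\ster\R^d}f\delta=\int_{\ster\R^d}f(\rho y)\psi(y)\,dy=\int_{\abs y\le R}f(\rho y)\psi(y)\,dy.\]
Since $\int_{\abs y\le R}\psi=\int_{\R^d}\psi=1$, we may write $f(0)=\int_{\abs y\le R}f(0)\psi(y)\,dy$, so that, by transfer of the triangle inequality for integrals,
\[\abs{\int_{\ster\R^d}f\delta-f(0)}\le\int_{\abs y\le R}\abs{f(\rho y)-f(0)}\,\abs{\psi(y)}\,dy\le\varepsilon\int_{\R^d}\abs{\psi(y)}\,dy,\]
where $\varepsilon:=\sup_{\abs y\le R}\abs{f(\rho y)-f(0)}$ is a nonnegative element of $\ster\R$ (this supremum exists, and is in fact attained, by transfer, since $f\in\ster{\Cnt}$ is $\ast$-continuous). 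As $\int_{\R^d}\abs{\psi(y)}\,dy$ is a finite standard number, it suffices to prove $\varepsilon\approx0$.

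This last step is the only place where some care is needed, and I expect it to be the main obstacle --- not because it is deep, but because one must pass from a pointwise infinitesimal estimate to a uniform one. I would argue by contradiction: if $\varepsilon\not\approx0$, then $\varepsilon>r$ for some standard $r>0$, so by transfer there is $y_0$ with $\abs{y_0}\le R$ and $\abs{f(\rho y_0)-f(0)}>r$; but $y_0\in\Fin(\ster\R^d)$ since $R$ is standard, so $\rho y_0\approx0$, and S-continuity of $f$ at $0$ forces $f(\rho y_0)\approx f(0)$, a contradiction. Hence $\varepsilon\approx0$, and the theorem follows. The parenthetical case $f\in\Cnt$ requires no separate argument, a standard continuous function being S-continuous at every point.
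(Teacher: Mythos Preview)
Your proof is correct and follows essentially the same approach as the paper: both subtract $f(0)=f(0)\int\delta$ and bound the difference by $\bigl(\max_{|x|\le a}|f(x)-f(0)|\bigr)\int|\psi|$, invoking S-continuity at $0$ to make the maximum infinitesimal. Your change of variables $x=\rho y$ and the explicit contradiction argument for the $\sup$ are cosmetic elaborations of what the paper does directly in the $x$-variable (with $a=\rho R$) in a single line.
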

\begin{proof}
Let \(a \approx 0\), \(a>0\) with \(\delta(x) = 0\) for \(\abs x\ge a\). As \(\int_{\ster\R^d}\delta = 1\) and \(\int_{\ster\R^d} \abs\delta = \int_{\R^d} \abs\psi\in\R\), we have
\[\abs{f(0) - \int_{\ster\R^d} f\delta} = \abs{\int_{\ster\R^d} (f(0) - f)\delta}
\le \max_{\abs x\le a}\abs{f(0) - f(x)} \int_{\ster\R^d}\abs\delta \approx 0
\]
since \(\max_{\abs x\le a}\abs{f(0) - f(x)}\approx 0\) by the S-continuity of $f$ in $0$.
\end{proof}
More generally, we define:
\begin{definition}\label{df-delta-orde-0}
Let $\delta\in\ster{\Cnt[\infty]}$. Then we call $\delta$ a delta function of order $0$ if $\int f\delta\approx f(0)$ for each S-continuous, S-compactly supported $f\in\ster\Cnt$.
\end{definition}
Each model delta function thus is a delta function of order $0$.

\begin{lemma}\label{delta-orde-0-support}
If $\delta\in\ster{\Cnt[\infty]}$ is an S-compactly supported delta function of order $0$, then $\int f\delta\approx f(0)$ for each $f\in\ster\Cnt$ that is S-continuous on $\R^d$.
\end{lemma}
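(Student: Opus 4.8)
The plan is to cut $f$ off to the (standard-sized) support of $\delta$, reducing the claim to Definition~\ref{df-delta-orde-0}. Since $\delta$ is S-compactly supported, fix $R\in\R$ with $\delta(x)=0$ whenever $\abs x\ge R$, and choose a standard cutoff $\chi\in\test$ with $0\le\chi\le 1$, with $\chi\equiv 1$ on $\{\abs x\le R\}$ and $\chi\equiv 0$ on $\{\abs x\ge R+1\}$; as usual, identify $\chi$ with its $\ster{}$-extension. Put $g:=f\cdot\chi$. Then $g\in\ster\Cnt$ (by transfer, a product of two internal continuous functions is continuous), $g$ is S-compactly supported since $g(x)=0$ for $\abs x\ge R+1$, and $g\delta=f\delta$ pointwise on $\ster\R^d$: where $\abs x\le R$ one has $\chi(x)=1$, and where $\abs x>R$ one has $\delta(x)=0$. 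Hence $\int f\delta=\int g\delta$ and $g(0)=\chi(0)f(0)=f(0)$.

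Next I would check that $g$ is S-continuous; being also S-compactly supported, it is then S-continuous on all of $\ster\R^d$ (as noted in the Preliminaries), so that Definition~\ref{df-delta-orde-0}, applied to $g$, gives $\int g\delta\approx g(0)$, which is the assertion. Again as noted in the Preliminaries, it suffices to verify S-continuity at the points of $\R^d$. If $x\in\R^d$ lies outside the (closed) support of $\chi$, then $\chi$ vanishes on a standard ball around $x$, so $g(y)=0=g(x)$ for all $y\approx x$. If $x$ lies in the support of $\chi$, write, for $y\approx x$,
\[g(y)-g(x)=\chi(y)\bigl(f(y)-f(x)\bigr)+f(x)\bigl(\chi(y)-\chi(x)\bigr):\]
in the first summand $\chi(y)\approx\chi(x)\in[0,1]$ is finite and $f(y)-f(x)\approx 0$ by S-continuity of $f$ at $x$; in the second, $\chi(y)-\chi(x)\approx 0$ by the (transferred) continuity of $\chi$ and $f(x)$ is finite, i.e.\ $f(x)\approxin\R$. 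Both summands are thus infinitesimal, so $g(y)\approx g(x)$, as needed.

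The step I expect to be the real obstacle is precisely the estimate of the term $f(x)\bigl(\chi(y)-\chi(x)\bigr)$ in the display. Since $\chi$ is merely Lipschitz, $\chi(y)-\chi(x)$ is only of order $\abs{y-x}$, and the product of an infinitesimal with an arbitrary element of $\ster\R$ need not be infinitesimal; so the argument genuinely uses $f(x)\approxin\R$ for $x\in\R^d$ (equivalently, that $\st f$ is defined). A constant infinite function shows that S-continuity of $f$ on $\R^d$ alone does not force the conclusion, so this finiteness is to be read into the hypothesis; granting it, the argument above is complete.
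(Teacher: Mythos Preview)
Your approach is exactly the paper's: multiply $f$ by a standard cutoff equal to $1$ on the support of $\delta$ and invoke Definition~\ref{df-delta-orde-0} for the product. The paper does this in one line, simply asserting that $f\psi$ is S-continuous and S-compactly supported.

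You have been more careful than the paper, and your worry in the last paragraph is justified: the S-continuity of $f\chi$ genuinely requires $f(x)\approxin\R$ at standard $x$, and this is \emph{not} implied by the stated hypothesis. In fact the lemma is false as written. Let $\delta_0$ be a model delta function, $\phi\in\test$ with $\int\phi=1$, and $\epsilon>0$ infinitesimal; set $\delta:=\delta_0+\epsilon\phi$. Then $\delta$ is S-compactly supported and is a delta function of order~$0$: any S-continuous S-compactly supported $g\in\ster\Cnt$ has $\sup\abs g\approxin\R$ (a compactness/overspill chaining argument on the support), so $\epsilon\int g\phi\approx 0$ and $\int g\delta\approx g(0)$. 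But $\int\delta=1+\epsilon$, so the constant function $f\equiv 1/\epsilon$, which is trivially S-continuous on $\R^d$, gives $\int f\delta-f(0)=1\not\approx 0$.

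So the finiteness should indeed be read into the hypothesis; it holds in every use of the lemma later in the paper. With that amendment your argument is complete and coincides with the paper's.
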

\begin{proof}
For some $R\in\R$, we have $\delta(x)=0$ whenever $|x|> R$. Let $\psi\in\Cnt_c$ and $\psi(x)=1$ for $|x|\le R$. Then
\[\int_{\ster\R^d} f\delta = \int_{\ster\R^d} f\psi \delta \approx (f\psi) (0) = f(0)\]
since $f\psi\in \ster\Cnt$ is S-continuous and S-compactly supported.
\end{proof}

In Def.~\ref{df-delta-orde-0} we only consider S-compactly supported $f$ because we also want to allow functions as in the next example as delta functions:
\begin{example}\label{delta-orde-0}
Let $\psi\in\Cnt[\infty]$ with \(\int_{\R^d}\psi = 1\) and \(\int_{\R^d}|\psi|<+\infty\). Let \(\rho\approx 0\), \(\rho >0\). Then \(\delta(x):= \frac{1}{\rho^d}\psi\big(\frac{x}\rho\big)\) is a delta function of order \(0\).
\end{example}
\begin{proof}
If $R\in\ster\R_\infty$, $R>0$, then $\int_{|x|> R} |\psi|\approx 0$. Thus there exists $a \approx 0$, $a>0$ with $\int_{|x|> a} |\delta|\approx 0$. If $f\in\ster\Cnt$ is S-continuous and S-compactly supported, then
\begin{align*}
\abs{f(0) - \int_{\ster\R^d} f\delta} &= \abs{\int_{\ster\R^d} (f(0) - f)\delta}
\le \int_{\abs{x}\le a} \abs{f(0) - f}\abs{\delta} + \int_{\abs{x}> a} \abs{f(0) - f}\abs{\delta}\\ &\le \max_{\abs x\le a}\abs{f(0) - f(x)} \int_{\ster\R^d}\abs\delta + \max_{x\in\ster\R^d}\abs{f(0) - f(x)} \int_{|x|>a}\abs\delta \approx 0
\end{align*}
as $|f(x)-f(0)|\approxin \R$ for each $x\approxin\R^d$ by the S-continuity of $f$ \cite[Thm.~4.5.8]{Robinson}.
\end{proof}

Just as infinitely many hyperreals ($\approx a$) correspond to the real number $a$, infinitely many nonstandard delta functions correspond to the $\delta$-distribution.

\section{Weak solutions of linear PDE}
\begin{definition}
A fundamental solution of order \(0\) for \(P(\partial)\) is a function \(E\in\ster{\Cnt[\infty]}\) with the property that \(P(\partial) E\) is a delta function of order \(0\).
\end{definition}
By means of a fundamental solution we will solve the equation \(P(\partial) u= f\) (where \(f\in\Cnt\) is given) up to infinitesimals.

\begin{lemma}\label{conv-met-delta-orde-0}
Let \(\delta\in\ster{\Cnt[\infty]}\) be a delta function of order \(0\) and let \(f\in\ster\Cnt\) be S-continuous on $\R^d$. If one of them is S-compactly supported, then \(f\conv \delta(x)\approx f(x)\), \(\forall x\approxin\R^d\).
\end{lemma}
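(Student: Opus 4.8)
The plan is to fix $x\approxin\R^d$ and reduce to the delta-function property by a change of variables. Writing $f\conv\delta(x)=\int_{\ster\R^d}g\,\delta$ with $g\colon\ster\R^d\to\ster\R$ the internal function $g(y):=f(x-y)$ (which lies in $\ster\Cnt$ by transfer, since translating and reflecting a continuous function keeps it continuous) and noting $g(0)=f(x)$, it suffices to prove $\int_{\ster\R^d}g\,\delta\approx g(0)$. The first step is to check that $g$ is S-continuous on $\R^d$: since $x\in\Fin(\ster\R^d)$, the map $y\mapsto x-y$ sends $\Fin(\ster\R^d)$ to $\Fin(\ster\R^d)$, so for $y\in\Fin(\ster\R^d)$ and $y'\approx y$ we get $x-y'\approx x-y\in\Fin(\ster\R^d)$ and hence $g(y')\approx g(y)$ from the S-continuity of $f$ on $\Fin(\ster\R^d)$.

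Next I would split into the two cases according to which function is S-compactly supported. If $\delta$ is S-compactly supported, the convolution integral is an integral of the $\ast$-continuous function $g\,\delta$ over a ball $\abs y\le R$ with $R\in\R$, hence converges, and $g$ being in $\ster\Cnt$ and S-continuous on $\R^d$, Lemma~\ref{delta-orde-0-support} directly gives $\int_{\ster\R^d}g\,\delta\approx g(0)=f(x)$. If instead $f$ is S-compactly supported, say $f$ vanishes outside $\abs z\le R$ with $R\in\R$, and $\abs x\le M$ with $M\in\R$, then $g$ vanishes outside $\abs y\le R+M$, so $g$ is S-compactly supported (and the convolution integral converges); combined with the S-continuity of $g$ on $\R^d$ from the previous paragraph, the remark from the Preliminaries that an S-compactly supported function that is S-continuous on $\R^d$ is S-continuous on all of $\ster\R^d$ shows that $g$ is S-continuous and S-compactly supported, so Definition~\ref{df-delta-orde-0} applies and again yields $\int_{\ster\R^d}g\,\delta\approx g(0)=f(x)$.

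The argument is essentially bookkeeping, and I do not expect a serious obstacle; the one point that needs care is that S-continuity of $f$ is only assumed on $\R^d$, so the finiteness of $x$ (i.e.\ $x\approxin\R^d$) must be used to keep the translated argument $x-y$ within the finite points — and in the case where $f$ has S-compact support one first upgrades this to S-continuity on all of $\ster\R^d$ before invoking the definition of a delta function of order $0$.
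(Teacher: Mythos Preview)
Your proposal is correct and follows essentially the same approach as the paper: write $f\conv\delta(x)=\int g\,\delta$ with $g(y)=f(x-y)$, observe that $g$ is S-continuous on $\R^d$ (and S-compactly supported if $f$ is), and then invoke Definition~\ref{df-delta-orde-0} or Lemma~\ref{delta-orde-0-support} according to the case. The paper compresses this into a single line, whereas you have spelled out the verification that finiteness of $x$ keeps $x-y$ in $\Fin(\ster\R^d)$ and the upgrade to global S-continuity in the S-compactly supported case; these are exactly the details the paper leaves implicit.
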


\begin{proof}
For $x\approxin \R^d$,
\[f\conv\delta(x) = \int f(x-y)\delta(y)\,dy \approx f(x)\]
since \(y\mapsto f(x-y)\) is S-continuous on $\R^d$ (and S-compactly supported if \(f\) is so).
\end{proof}

\begin{theorem}\label{lemma-fund-opl}\leavevmode
If \(E\) is a fundamental solution of order \(0\) for \(P(\partial)\) and \(f\in\Cnt_c\), then \(u:=E\conv f\in\ster{\Cnt[\infty]}\)
satisfies
\begin{equation}\label{eq-zwakke-oplossing-Cnt}
P(\partial)u (x) \approx f(x),\quad\forall x\approxin\R^d.
\end{equation}
\end{theorem}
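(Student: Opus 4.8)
The plan is to reduce the statement to Lemma~\ref{conv-met-delta-orde-0} by commuting the differential operator $P(\partial)$ past the convolution. The classical identity behind this is that for $g\in\Cnt[\infty]$ and $\phi\in\Cnt[\infty]_c$ the convolution $g\conv\phi$ exists, lies in $\Cnt[\infty]$, and satisfies $\partial^\alpha(g\conv\phi)=(\partial^\alpha g)\conv\phi$ for every multi-index $\alpha$. Since $f\in\Cnt[\infty]_c\subseteq\ster{\Cnt[\infty]_c}$ and $E\in\ster{\Cnt[\infty]}$, transfer gives that $u:=E\conv f\in\ster{\Cnt[\infty]}$ and $\partial^\alpha u=(\partial^\alpha E)\conv f$. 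Summing over $|\alpha|\le m$ with the real coefficients $c_\alpha$ (a finite standard sum, so unaffected by transfer) yields
\[
P(\partial)u \;=\; \sum_{|\alpha|\le m} c_\alpha\,(\partial^\alpha E)\conv f \;=\; \bigl(P(\partial)E\bigr)\conv f.
\]

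Next I would set $\delta:=P(\partial)E$. By the definition of a fundamental solution of order $0$, $\delta\in\ster{\Cnt[\infty]}$ is a delta function of order $0$. The function $f$, being a standard smooth function with compact support, is S-continuous on $\R^d$ and S-compactly supported (its support is contained in a ball of standard radius, and a standard continuous function is S-continuous at every point $\approxin\R^d$; combined with S-compact support it is S-continuous on all of $\ster\R^d$ by the remark at the end of Section~2). Hence Lemma~\ref{conv-met-delta-orde-0} applies to $\delta$ and $f$, with $f$ playing the role of the S-compactly supported factor (using commutativity of $\conv$, which again holds by transfer of the classical fact), and gives
\[
P(\partial)u(x) \;=\; (\delta\conv f)(x) \;=\; (f\conv\delta)(x) \;\approx\; f(x),\qquad\forall x\approxin\R^d,
\]
which is exactly \eqref{eq-zwakke-oplossing-Cnt}.

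The only genuinely delicate point is the first step: one must transfer the differentiation-under-the-convolution identity in the correct form, so that the derivative lands on the non-compactly-supported factor $E$ (this is where it matters that $f$, not $E$, has compact support, both for the convolution to be defined and for the identity to apply). Once $P(\partial)u=(P(\partial)E)\conv f$ is established, everything is a direct appeal to Lemma~\ref{conv-met-delta-orde-0}, with only bookkeeping to check that $f\in\Cnt[\infty]_c$ satisfies its S-continuity and S-compact-support hypotheses. I do not anticipate any analytic difficulty beyond these transfer-and-checklist issues.
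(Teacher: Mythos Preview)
Your approach is essentially identical to the paper's: commute $P(\partial)$ past the convolution by transfer and then invoke Lemma~\ref{conv-met-delta-orde-0}. One small slip: the hypothesis is only $f\in\Cnt_c$, not $f\in\Cnt[\infty]_c$ as you write in two places; fortunately the classical identity $\partial^\alpha(g\conv\phi)=(\partial^\alpha g)\conv\phi$ requires only $g\in\Cnt[\infty]$ and $\phi\in\Cnt_c$ (the derivatives fall on $g$, as you yourself note at the end), and Lemma~\ref{conv-met-delta-orde-0} needs only $f\in\ster\Cnt$ S-continuous and S-compactly supported, so your argument goes through unchanged once the hypothesis is stated correctly.
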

\begin{proof}
As \(f\in\Cnt_c\), \(f\conv g\) is well defined for each \(g\in\ster\Cnt\), and thus
\[P(\partial) (E\conv f) (x) = (P(\partial) E)\conv f(x) \approx f(x),\quad\forall x\approxin\R^d\]
by Lemma \ref{conv-met-delta-orde-0}.
\end{proof}

The following theorem connects eq.~\eqref{eq-zwakke-oplossing-Cnt} to the classical notion of weak solution of a PDE. It is a digression which is not necessary for our development of the theory. It illustrates that also the concept of weak solution is an indirect way to express something that is naturally expressed with infinitesimals (a \emph{replacement construction for infinitesimals}, according to Laugwitz).
\begin{theorem}\label{distrib-oplossing}\leavevmode Let \(f\in\Cnt\).
\begin{enumerate}
\item If \(u\in\ster{\Cnt[\infty]}\) satisfies \eqref{eq-zwakke-oplossing-Cnt} and $v:=\st u\in\Cnt$ exists (i.e., if $u$ is S-continuous on $\R^d$ and $u(x)\approxin\R$, $\forall x\in \R^d$), then $v$ is a weak solution of \(P(\partial) v = f\).
\item Conversely, if \(v\in\Cnt\) is a weak solution of \(P(\partial) v =f\), then there exists \(u\in\ster{\Cnt[\infty]}\) with \(\st u = v\) satisfying \eqref{eq-zwakke-oplossing-Cnt}.
\end{enumerate}
\end{theorem}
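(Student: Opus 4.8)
The plan is to work from the classical characterization of a weak solution: $v\in\Cnt$ is a weak solution of $P(\partial)v=f$ iff $\int_{\R^d}v\,P(\partial)^t\phi=\int_{\R^d}f\phi$ for every $\phi\in\test$, where $P(\partial)^t:=\sum_{\abs\alpha\le m}(-1)^{\abs\alpha}c_\alpha\partial^\alpha$ is the formal transpose of $P(\partial)$. Both directions rest on two ingredients: the transferred integration-by-parts identity $\int(P(\partial)g)\phi=\int g\,P(\partial)^t\phi$, valid for $g\in\ster{\Cnt[\infty]}$ and $\phi\in\ster{\Cnt[\infty]_c}$ since both integrands are $\ast$-compactly supported; and the observation, used several times, that if $h\colon\ster\R^d\to\ster\R$ is internal, $h(x)\approx0$ for every $x\in\Fin(\ster\R^d)$, and $h$ vanishes outside $\ster K$ for some standard compact $K\subseteq\R^d$, then $\sup_{x\in\ster K}\abs{h(x)}\approx0$ (the internal bounded set $\{\abs{h(x)}:x\in\ster K\}$ has infinitesimal supremum) and hence $\int h\approx0$, as $\ster K$ has finite measure.

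For part (1), fix $\phi\in\test$. The function $(P(\partial)u-f)\,\phi$ is internal, it is infinitesimal at every finite point because $P(\partial)u(x)\approx f(x)$ for all $x\approxin\R^d$, and it is S-compactly supported; so the observation gives $\int(P(\partial)u)\phi\approx\int f\phi=\int_{\R^d}f\phi$, a standard real. Integrating by parts, $\int(P(\partial)u)\phi=\int u\,P(\partial)^t\phi$, and since $u$ is S-continuous on $\R^d$ with $\st u=v$, the observation applied to $(u-v)\,P(\partial)^t\phi$ gives $\int u\,P(\partial)^t\phi\approx\int_{\R^d}v\,P(\partial)^t\phi$, again a standard real. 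Two infinitely close standard reals coincide, so $\int_{\R^d}v\,P(\partial)^t\phi=\int_{\R^d}f\phi$; as $\phi$ was arbitrary, $v$ is a weak solution.

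For part (2), I would mollify. Pick $\psi\in\test$ with $\int_{\R^d}\psi=1$ and $\rho\approx0$, $\rho>0$, and let $\delta(x):=\frac1{\rho^d}\psi(x/\rho)$ be the associated model delta function; it lies in $\ster{\Cnt[\infty]}$ and is S-compactly supported. Put $u:=v\conv\delta$. By transfer of ``a $\Cnt$ function convolved with a $\Cnt[\infty]_c$ function is $\Cnt[\infty]$, with $\partial^\alpha$ falling on the second factor'', we get $u\in\ster{\Cnt[\infty]}$ and $P(\partial)u=v\conv P(\partial)\delta$. Since $v$ is standard and continuous, hence S-continuous on $\R^d$, Lemma~\ref{conv-met-delta-orde-0} yields $u(x)=v\conv\delta(x)\approx v(x)$ for all $x\approxin\R^d$, so $u$ is S-continuous with finite values and $\st u=v$. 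For the equation, fix $x\approxin\R^d$ and note that $w\colon z\mapsto\delta(x-z)$ belongs to $\ster{\Cnt[\infty]_c}$ and $\big(P(\partial)^t_z w\big)(z)=(P(\partial)\delta)(x-z)$, since each differentiation in $z$ contributes a sign $(-1)^{\abs\alpha}$ that cancels the one in $P(\partial)^t$. Hence, applying the weak-solution identity---which transfer extends to all test functions in $\ster{\Cnt[\infty]_c}$---to $\phi=w$,
\[
P(\partial)u(x)=\int v(z)\,(P(\partial)\delta)(x-z)\,dz=\int v(z)\,\big(P(\partial)^t_z w\big)(z)\,dz=\int f(z)\,\delta(x-z)\,dz=f\conv\delta(x),
\]
and Lemma~\ref{conv-met-delta-orde-0}, now applied to $f$ (S-continuous on $\R^d$) and the S-compactly supported $\delta$, gives $f\conv\delta(x)\approx f(x)$, which is precisely \eqref{eq-zwakke-oplossing-Cnt}.

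The hard part is not any single computation but making the two ``pointwise infinitesimal $\Rightarrow$ integral infinitesimal'' steps of part (1) airtight: one must genuinely use that $u$ and $P(\partial)u$ are internal, so that being infinitesimal at every finite point upgrades to an infinitesimal supremum over the relevant $\ast$-compact set---the same mechanism underlying the $\max_{\abs x\le a}$ estimates used earlier in the paper. The transpose bookkeeping in part (2)---verifying $\big(P(\partial)^t_z w\big)(z)=(P(\partial)\delta)(x-z)$ and that the transferred weak-solution identity may legitimately be applied to the internal test function $z\mapsto\delta(x-z)$---is routine but must be stated with care.
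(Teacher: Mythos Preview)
Your proof is correct and follows essentially the same route as the paper's: for part~(1) you use integration by parts together with the two ``pointwise infinitesimal on a standard compact support $\Rightarrow$ integral infinitesimal'' steps, and for part~(2) you mollify $v$ by an S-compactly supported model delta function, apply the transferred weak-solution identity to the internal test function $z\mapsto\delta(x-z)$, and finish with Lemma~\ref{conv-met-delta-orde-0}. The paper's proof is terser---it simply writes the chain $\int vP(-\partial)\phi\approx\int uP(-\partial)\phi=\int(P(\partial)u)\phi\approx\int f\phi$ without isolating your ``observation''---but the underlying argument is identical.
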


\begin{proof}
1. If \(P(\partial) = \sum_{\abs \alpha\le m} c_\alpha \partial^\alpha\), then we denote \(P(-\partial) = \sum_{\abs \alpha\le m} c_\alpha (-1)^{\abs\alpha} \partial^\alpha\). By definition, \(v\) is a weak solution if
\[\int v P(-\partial)\phi =\int f \phi,\quad\forall \phi\in\test.\]
For \(\phi\in\Cnt[m]_c\), we have
\[\int v P(-\partial)\phi\approx \int u P(-\partial)\phi = \int \bigl(P(\partial) u\bigr)\phi \approx \int f \phi\]
because \(u(x)\approx v(x)\), $\forall x\approxin\R^d$ and by eq.\ \eqref{eq-zwakke-oplossing-Cnt}.\\
2. Choose an S-compactly supported delta function \(\delta\) of order \(0\) and define \(u:=v\conv \delta\in\ster{\Cnt[\infty]}\). Then $u(x)\approx v(x)$, $\forall x\approxin\R^d$. By transfer, we have \(\int v P(-\partial)\phi=\int f\phi\) for every \(\phi\in\ster\test\), and thus, for $x\approxin\R^d$,
\begin{align*}
P(\partial)u(x)&= v\conv(P(\partial) \delta) (x)=\int v(y)\bigl(P(\partial)\delta\bigr)(x-y)\,dy\\
&= \int v(y)P(-\partial)\bigl(\delta(x-y)\bigr)\,dy =f\conv\delta(x)\approx f(x).
\end{align*}
\end{proof}

\section{Strong solutions of linear PDE}
If $u$ satisfies eq.\ \eqref{eq-zwakke-oplossing-Cnt} and $\st u$ is defined on the whole of $\R^d$, then $\st u$ is not necessarily a solution (in the classical sense) of the PDE: $\st u$ is in general not differentiable. We recall a well-known condition that guarantees this:
\begin{definition}\cite[5.1.2]{Robert}
Let $f$: $\ster\R\to\ster\R$. We call $f$ S-differentiable at $x\in\ster\R$ with S-derivative $f'_S(x)\in\R$ if $\frac{f(y)- f(x)}{y-x}\approx f'_S (x)$, $\forall y\approx x$ ($y\ne x$).
\end{definition}
Clearly, S-differentiability implies S-continuity.
\begin{theorem}\label{st-afgeleide}
Let $f$: $\ster\R\to\ster\R$ be internal and S-differentiable on $\R$ with $f(x)\approxin\R$ for each $x\in\R$. Then $(\st f)'(x) = f'_S(x)$ for each $x\in\R$.
\end{theorem}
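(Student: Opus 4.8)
The plan is to fix $x_0\in\R$, write $L:=f'_S(x_0)\in\R$, and first note that the hypotheses guarantee that $g:=\st f\in\Cnt$ exists: S-differentiability implies S-continuity, so $f$ is S-continuous on $\R$, and $f(x)\approxin\R$ for all $x\in\R$ is assumed. Since $g(x)\approx f(x)$ for all $x\approxin\R$, it then suffices to show that $g$ is differentiable at $x_0$ with $g'(x_0)=L$, i.e.\ that $\frac{g(x_0+t)-g(x_0)}{t}\to L$ as $t\to 0$ in $\R\setminus\{0\}$.

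The naive attempt — comparing the difference quotients of $f$ and $g$ at an infinitesimal increment — does not work, because $\st$ does not commute with dividing by an infinitesimal. Instead I would transport the infinitesimal-scale control coming from S-differentiability down to a \emph{standard} increment, via underspill. Fix a standard $\varepsilon>0$ and consider
\[S_\varepsilon:=\Bigl\{\delta>0 : \forall y\in\ster\R\ \bigl(0<|y-x_0|\le\delta \Rightarrow \bigl|\tfrac{f(y)-f(x_0)}{y-x_0}-L\bigr|\le\varepsilon\bigr)\Bigr\},\]
which is internal because $f$ is internal. Every positive infinitesimal belongs to $S_\varepsilon$: if $0<|y-x_0|\le\delta\approx 0$ then $y\approx x_0$ and $y\ne x_0$, so $\frac{f(y)-f(x_0)}{y-x_0}\approx L$ by S-differentiability, whence the difference is $<\varepsilon$. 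Hence the internal set $\{n\in\ster\N : n\ge 1,\ 1/n\in S_\varepsilon\}$ contains every infinite $n$, so by underspill it contains some standard $n_0$; put $\delta_0:=1/n_0\in\R_{>0}$.

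Now take any real $t$ with $0<|t|\le\delta_0$. Applying the defining property of $S_\varepsilon$ to $y=x_0+t$ gives $\bigl|\frac{f(x_0+t)-f(x_0)}{t}-L\bigr|\le\varepsilon$. Since $t$ and $L$ are standard and $x_0+t,x_0\in\R$, we have $\st\bigl(f(x_0+t)\bigr)=g(x_0+t)$ and $\st\bigl(f(x_0)\bigr)=g(x_0)$, so taking standard parts (which respect the ring operations on finite elements and preserve the inequality) yields $\bigl|\frac{g(x_0+t)-g(x_0)}{t}-L\bigr|\le\varepsilon$. As $\varepsilon>0$ was an arbitrary standard real, this says exactly that $\lim_{t\to 0}\frac{g(x_0+t)-g(x_0)}{t}=L$, i.e.\ $(\st f)'(x_0)=f'_S(x_0)$.

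I expect the main obstacle to be precisely the middle step — upgrading the pointwise infinitesimal estimate to an estimate valid on a fixed standard radius $\delta_0$ — which is why $f$ is assumed internal: without internality the set $S_\varepsilon$ need not be internal and the underspill argument breaks down. Once a standard $\delta_0$ is in hand, the rest is routine manipulation of standard parts.
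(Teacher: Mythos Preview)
Your proof is correct and follows essentially the same route as the paper's: use the internality of $f$ to pass (via overspill/underspill) from the infinitesimal-scale difference-quotient estimate to one valid on a standard radius $\delta_0$, then specialize to standard increments and take standard parts. The only cosmetic difference is that the paper applies overspill directly to the internal set of admissible $\delta$'s, whereas you reindex by $n=1/\delta$ and apply underspill; the content is identical.
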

\begin{proof}
Let $x\in\R$. By overspill, S-differentiability implies that for each $\varepsilon\in\R$, $\varepsilon>0$ there exists $\delta\in\R$, $\delta>0$ such that
\[\abs[\Big]{\frac{f(y) - f(x)}{y-x} - f'_S(x)}\le \varepsilon, \quad\forall y\in\ster\R, 0\ne |y-x|\le\delta.\]
Choosing in particular $y\in\R$, we also have $\abs[\Big]{\frac{(\st f)(y) - (\st f)(x)}{y-x} - f'_S(x)}\le \varepsilon$.
\end{proof}

\begin{lemma}\label{verwisselen-st-afg-dim-d}
Let \(f\in\ster{\Cnt[1]}\). If \(\st f\) and $\st(\partial_j f)$ exist $(j=1,\dots,d)$, then \(\st f\in\Cnt[1]\) with \(\partial_j(\st f) = \st (\partial_j f)\).
\end{lemma}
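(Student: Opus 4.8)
The plan is to reduce to one variable at a time and invoke Theorem~\ref{st-afgeleide}. Fix $j\in\{1,\dots,d\}$ and fix standard values $x_1,\dots,x_{j-1},x_{j+1},\dots,x_d\in\R$ for the remaining coordinates; let $g\colon\ster\R\to\ster\R$ be the internal function $g(t):=f(x_1,\dots,x_{j-1},t,x_{j+1},\dots,x_d)$. First I would check that $g$ is S-differentiable on $\R$. Since $\st(\partial_j f)$ exists, $\partial_j f$ is S-continuous on $\R^d$ and $(\partial_j f)(y)\approx\st(\partial_j f)(y)$ for every $y\approxin\R^d$. For a standard $t\in\R$ and $s\approx t$ with $s\ne t$, transfer of the mean value theorem (applied to the restriction of $f\in\ster{\Cnt[1]}$ to the $j$-th coordinate line) gives a point $\xi$ strictly between $s$ and $t$ with $\frac{g(s)-g(t)}{s-t}=(\partial_j f)(x_1,\dots,\xi,\dots,x_d)$; as $\xi\approx t$, the right-hand side is $\approx\st(\partial_j f)(x_1,\dots,t,\dots,x_d)\in\R$. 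Hence $g$ is S-differentiable on $\R$ with $g'_S(t)=\st(\partial_j f)(x_1,\dots,t,\dots,x_d)$, and $g(t)\approx\st f(x_1,\dots,t,\dots,x_d)\in\R$ for standard $t$ because $\st f$ exists.

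By Theorem~\ref{st-afgeleide}, $(\st g)'(t)=g'_S(t)=\st(\partial_j f)(x_1,\dots,t,\dots,x_d)$ for every $t\in\R$. Since $g(t)\approx\st f(x_1,\dots,t,\dots,x_d)$ at every standard $t$, and both $\st g$ and $t\mapsto\st f(x_1,\dots,t,\dots,x_d)$ are standard functions $\R\to\R$, they coincide; therefore the $j$-th partial derivative of $\st f$ exists at $(x_1,\dots,t,\dots,x_d)$ and equals $\st(\partial_j f)$ there. As the frozen coordinates and $t$ were arbitrary standard data, and every point of $\R^d$ arises this way, $\partial_j(\st f)$ exists on all of $\R^d$ and $\partial_j(\st f)=\st(\partial_j f)$.

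Finally, $\st(\partial_j f)\in\Cnt$ by the construction of the standard part recalled in the preliminaries, so each $\partial_j(\st f)$ is continuous; since this holds for $j=1,\dots,d$, we conclude $\st f\in\Cnt[1]$. I expect the one genuinely substantive step to be the S-differentiability of $g$ on $\R$: this is where both hypotheses are used---finiteness of $f$ at finite points, so that the difference quotients land near a real, and, crucially, the S-continuity of $\partial_j f$, which is exactly what the existence of $\st(\partial_j f)$ provides and which is needed to control the mean-value point $\xi\approx t$. Everything after that---applying Theorem~\ref{st-afgeleide}, matching $\st g$ with the corresponding one-variable slice of $\st f$, and assembling the partials---is routine bookkeeping.
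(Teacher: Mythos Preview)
Your proof is correct and follows essentially the same approach as the paper: freeze all but the $j$-th coordinate, use the transferred mean value theorem together with the S-continuity of $\partial_j f$ (implicit in the existence of $\st(\partial_j f)$) to show the one-variable slice is S-differentiable, and then apply Theorem~\ref{st-afgeleide}. The paper simply states this more tersely, leaving the details you spell out (the MVT step, the identification of $\st g$ with the slice of $\st f$, and the final continuity observation) to the reader.
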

\begin{proof}
Let \(a\in\R^d\) and let $f_j$ be the partial function
\[f_j: x_j\mapsto f(a_1,\dots,a_{j-1},x_j,a_{j+1},\dots,a_d).\]
By the mean value theorem, $f_j$ is S-differentiable on $\R$ with $f_j'(a_j)\approx f_{j S}'(a_j)$. Applying Thm.~\ref{st-afgeleide} to $f_j$, we further have $f_{j S}'(a_j)=(\st f_j)'(a_j) = \partial_j(\st f)(a)$.
\end{proof}

Again by the mean value theorem, we have:

\begin{lemma}\label{eindige-interne-afgeleide-is-Scnt}
Let \(f\in\ster{\Cnt[1]}\). If \(\partial_j f(x)\approxin \R\) for each \(x\approxin\R^d\) and each $j$, then \(f\) is S-continuous on $\R^d$.
\end{lemma}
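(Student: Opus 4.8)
The plan is to reduce to the one-variable mean value theorem by moving one coordinate at a time. Fix $x\in\Fin(\ster\R^d)$ and $y\approx x$; it suffices to show $f(y)\approx f(x)$, since this yields S-continuity on $\Fin(\ster\R^d)$, hence on $\R^d$. Write $f(y)-f(x)$ as the telescoping sum $\sum_{j=1}^{d}\bigl(f(x^{(j)})-f(x^{(j-1)})\bigr)$, where $x^{(0)}:=x$ and $x^{(j)}:=(y_1,\dots,y_j,x_{j+1},\dots,x_d)$, so that $x^{(d)}=y$ and $x^{(j)}$, $x^{(j-1)}$ differ only in the $j$-th coordinate.

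For each $j$ with $x_j\ne y_j$ I would apply the mean value theorem — valid by transfer, since $f\in\ster{\Cnt[1]}$ — to the internal $\Cnt[1]$ function $t\mapsto f(y_1,\dots,y_{j-1},t,x_{j+1},\dots,x_d)$ on the interval with endpoints $x_j$, $y_j$. This produces $\xi_j\in\ster\R$ strictly between $x_j$ and $y_j$ with $f(x^{(j)})-f(x^{(j-1)}) = \partial_j f(\eta^{(j)})\,(y_j-x_j)$, where $\eta^{(j)}:=(y_1,\dots,y_{j-1},\xi_j,x_{j+1},\dots,x_d)$; when $x_j=y_j$ this summand is simply $0$.

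The crucial point is that each $\eta^{(j)}$ is finite: its coordinates are coordinates of $x$ or of $y$, or the value $\xi_j$ squeezed between $x_j\approx y_j$, so $\eta^{(j)}\approxin\R^d$. Hence $\partial_j f(\eta^{(j)})\approxin\R$ by hypothesis — in particular it is finite — while $y_j-x_j\approx 0$; so each summand $\partial_j f(\eta^{(j)})(y_j-x_j)$ is infinitesimal, and therefore so is the $d$-term sum $f(y)-f(x)$. This proves that $f$ is S-continuous at $x$, as desired.

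I do not expect a genuine obstacle: the argument is essentially the classical proof that $\Cnt[1]$ functions are continuous, transported to $\ster{\Cnt[1]}$. The only step requiring care is verifying that the intermediate points $\eta^{(j)}$ delivered by the mean value theorem lie in $\Fin(\ster\R^d)$, so that the hypothesis $\partial_j f(\eta^{(j)})\approxin\R$ actually applies; this is immediate from $y\approx x$ and $x$ finite.
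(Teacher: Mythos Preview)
Your proof is correct and follows exactly the approach the paper intends: the paper's own proof consists of the single phrase ``Again by the mean value theorem,'' and your telescoping-plus-MVT argument is precisely the standard way to unpack that phrase in several variables. The only content beyond the paper's hint is your explicit check that the intermediate points $\eta^{(j)}$ are finite, which is indeed the one place where care is needed.
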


We can now inductively handle PDE of arbitrary order:
\begin{theorem}\label{struct-FinCk}
Let \(f\in\ster{\Cnt[k]}\). If \(\partial^\alpha f(x)\approxin\R\), \(\forall x\approxin \R^d\), \(\forall\alpha\in\N^d, |\alpha|\le k\), and $\partial^\alpha f$ is S-continuous on $\R^d$, $\forall\alpha\in\N^d, |\alpha|\le k$, then \(\st f\in\Cnt[k]\) and \(\partial^\alpha(\st f) = \st (\partial^\alpha f)\), \(\forall \alpha\in\N^d, |\alpha|\le k\).
\end{theorem}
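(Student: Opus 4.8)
The plan is to argue by induction on $k$. The case $k=0$ needs nothing beyond the existence of $\st f$, which is exactly the hypothesis that $f$ is S-continuous on $\R^d$ with $f(x)\approxin\R$ for every $x\in\R^d$; and for $\alpha=0$ the asserted identity $\partial^\alpha(\st f)=\st(\partial^\alpha f)$ is the definition of $\st f$.

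For the inductive step I would assume the statement for $k-1$ and take $f\in\ster{\Cnt[k]}$ satisfying the hypotheses; write $e_j\in\N^d$ for the multi-index with a $1$ in slot $j$ and $0$ elsewhere. Fixing $j$, transfer gives $\partial_j f\in\ster{\Cnt[k-1]}$, and for every $\beta\in\N^d$ with $|\beta|\le k-1$ one has $\partial^\beta(\partial_j f)=\partial^{\beta+e_j}f$ with $|\beta+e_j|\le k$, so the hypotheses of the theorem at order $k-1$ are inherited by $\partial_j f$. The induction hypothesis then yields $\st(\partial_j f)\in\Cnt[k-1]$ together with $\partial^\beta\st(\partial_j f)=\st(\partial^{\beta+e_j}f)$ for all $|\beta|\le k-1$. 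Since $\st(\partial_j f)$ thus exists for every $j$ (and $\st f$ exists by the $k=0$ case), I may invoke Lemma~\ref{verwisselen-st-afg-dim-d} to get $\st f\in\Cnt[1]$ with $\partial_j(\st f)=\st(\partial_j f)$.

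Combining the last two facts, every first-order partial $\partial_j(\st f)$ of $\st f$ equals $\st(\partial_j f)\in\Cnt[k-1]$; the classical implication ``a map $\R^d\to\R$ all of whose first-order partials exist and lie in $\Cnt[k-1]$ is itself in $\Cnt[k]$'' then gives $\st f\in\Cnt[k]$. The commutation formula follows: for $|\alpha|\ge 1$ write $\alpha=\beta+e_j$ with $|\beta|\le k-1$ and chain $\partial^\alpha(\st f)=\partial^\beta\bigl(\partial_j(\st f)\bigr)=\partial^\beta\st(\partial_j f)=\st(\partial^{\beta+e_j}f)=\st(\partial^\alpha f)$.

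The whole argument is essentially bookkeeping: the two genuinely analytic ingredients — Thm.~\ref{st-afgeleide} and Lemma~\ref{verwisselen-st-afg-dim-d}, both resting on the mean value theorem — are already in hand. I do not expect a real obstacle; the only point deserving an explicit word is the purely classical step ``all first partials in $\Cnt[k-1]$ $\Rightarrow$ the function is in $\Cnt[k]$'', which can either be cited or unwound by a short side induction on $k$ nested inside the main one.
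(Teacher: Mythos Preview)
Your proof is correct and uses the same analytic ingredient as the paper --- Lemma~\ref{verwisselen-st-afg-dim-d} --- together with an induction. The only difference is organizational: rather than inducting on $k$ and applying the full theorem at level $k-1$ to each $\partial_j f$, the paper first notes that every $\st(\partial^\alpha f)$ with $|\alpha|\le k$ exists and then applies Lemma~\ref{verwisselen-st-afg-dim-d} directly to each $\partial^\alpha f$ with $|\alpha|<k$ to obtain $\partial_j\bigl(\st(\partial^\alpha f)\bigr)=\st(\partial_j\partial^\alpha f)$, from which the commutation $\partial^\alpha(\st f)=\st(\partial^\alpha f)$ follows by a short induction on $|\alpha|$; your version simply packages the same iteration differently.
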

\begin{proof}
By the assumptions, $\st (\partial^\alpha f)$ exists, \(\forall \alpha\in\N^d, |\alpha|\le k\). By Lemma \ref{verwisselen-st-afg-dim-d}, \(\partial_j(\st \partial^\alpha f) = \st (\partial_j\partial^\alpha f)\), $\forall j$, $|\alpha|<k$.
\end{proof}
Combining this with Lemma \ref{eindige-interne-afgeleide-is-Scnt}, we obtain:
\begin{corollary}(cf. \cite[Prop.~2.8]{Oberguggenberger88})\label{struct-Fin-ster-D}
Let \(f\in\ster{\Cnt[\infty]}\). If \(\partial^\alpha f(x)\approxin\R\), \(\forall x\approxin \R^d\), \(\forall\alpha\in\N^d\), then \(\st f\in\Cnt[\infty]\) and \(\partial^\alpha(\st f) = \st (\partial^\alpha f)\), \(\forall \alpha\in\N^d\).
\end{corollary}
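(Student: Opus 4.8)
The plan is to deduce this directly from Theorem~\ref{struct-FinCk}, applied with each $k\in\N$ in turn. The point is that the present hypothesis (finiteness of \emph{all} partials of $f$ on $\R^d$) is strong enough to supply, for free, the S-continuity input that Theorem~\ref{struct-FinCk} also requires; so the whole argument is essentially bookkeeping plus a passage to the limit $k\to\infty$.

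First I would fix $\alpha\in\N^d$ and note that $\partial^\alpha f\in\ster{\Cnt[\infty]}\subseteq\ster{\Cnt[1]}$, with first-order partials $\partial_j(\partial^\alpha f)=\partial^{\alpha+e_j} f$ (where $e_j\in\N^d$ is the $j$-th standard basis vector, so that $\alpha+e_j$ is again a legitimate multi-index). By the hypothesis of the corollary, applied to $\alpha+e_j$, we have $\partial^{\alpha+e_j} f(x)\approxin\R$ for every $x\approxin\R^d$ and every $j$. Hence Lemma~\ref{eindige-interne-afgeleide-is-Scnt}, applied to the internal function $\partial^\alpha f$, shows that $\partial^\alpha f$ is S-continuous on $\R^d$. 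In particular, taking $\alpha=0$, $f$ is S-continuous on $\R^d$, and since $f(x)\approxin\R$ for $x\in\R^d$, the standard part $\st f$ exists; likewise $\st(\partial^\alpha f)$ exists for every $\alpha$.

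Now fix $k\in\N$. The two hypotheses of Theorem~\ref{struct-FinCk} for this $k$ — namely $\partial^\alpha f(x)\approxin\R$ for all $x\approxin\R^d$ and all $|\alpha|\le k$, and S-continuity of $\partial^\alpha f$ on $\R^d$ for all $|\alpha|\le k$ — are precisely what we have just established, the first being assumed outright and the second coming from the previous paragraph. Therefore $\st f\in\Cnt[k]$ and $\partial^\alpha(\st f)=\st(\partial^\alpha f)$ for all $|\alpha|\le k$. Since $k\in\N$ was arbitrary and $\Cnt[\infty]=\bigcap_{k\in\N}\Cnt[k]$ by definition, we conclude $\st f\in\Cnt[\infty]$ with $\partial^\alpha(\st f)=\st(\partial^\alpha f)$ for every $\alpha\in\N^d$. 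There is no genuine obstacle here; the only place that warrants a moment's care is the observation that feeding the multi-index $\alpha+e_j$ into the hypothesis turns "all partials finite" into the exact S-continuity premise needed by Theorem~\ref{struct-FinCk}.
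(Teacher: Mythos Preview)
Your proof is correct and follows exactly the approach the paper indicates: the corollary is stated immediately after the sentence ``Combining this with Lemma~\ref{eindige-interne-afgeleide-is-Scnt}, we obtain,'' and you have simply spelled out that combination---using Lemma~\ref{eindige-interne-afgeleide-is-Scnt} on each $\partial^\alpha f$ to supply the S-continuity hypothesis of Theorem~\ref{struct-FinCk}, then letting $k$ range over $\N$.
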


The conditions on \(f\) in the previous results suggest the following notations:
\begin{definition}(cf.\ \cite[\S 10.4]{SL})
Let $k\in\N\cup\{\infty\}$. Let $f$, $g$ $\in\ster{\Cnt[k]}$. Then we denote
\begin{align*}
f\approx_{\Cnt[k]} g&\text{ if }\partial^\alpha f(x)\approx \partial^\alpha g(x), \ \forall x\approxin \R^d,\ \forall \alpha\in\N^d, |\alpha|\le k\\
f\approxin \Cnt[k] &\text{ if }f\approx_{\Cnt[k]} g, \text{ for some }g\in \Cnt[k]\\
f\approx_{\Cnt[k]_c} g&\text{ if }f\approx_{\Cnt[k]} g \text{ and }f(x)=g(x), \ \forall x\in\ster\R^d_\infty\\
f\approxin \Cnt[k]_c &\text{ if }f\approx_{\Cnt[k]_c} g, \text{ for some }g\in \Cnt[k]_c.
\end{align*}
\end{definition}
By Thm.~\ref{struct-FinCk}, $f\approxin\Cnt[k]$ iff $\partial^\alpha f(x)\approxin\R$, $\forall x\approxin \R^d$, $\forall\alpha\in\N^d$, $|\alpha|\le k$ and $\partial^\alpha f$ is S-continuous on $\R^d$, $\forall\alpha\in\N^d, |\alpha|\le k$.
By Cor.~\ref{struct-Fin-ster-D}, $f\approxin\Cnt[\infty]$ iff $\partial^\alpha f(x)\approxin\R$, $\forall x\approxin \R^d$, $\forall\alpha\in\N^d$. Furthermore, $f\approxin\Cnt[k]_c$ iff $f\approxin \Cnt[k]$ and $f$ is S-compactly supported.

\begin{theorem}\label{opl-op-inf-na-impliceert-opl}
If \(f\in\Cnt\) and \(u\approxin\Cnt[\infty]\) satisfies equation \eqref{eq-zwakke-oplossing-Cnt}, then \(v:=\st u\in\Cnt[\infty]\) satisfies \(P(\partial)v=f\). 
\end{theorem}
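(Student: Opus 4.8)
The plan is to reduce the statement at once to the structural corollary Cor.~\ref{struct-Fin-ster-D}. First I would note that the hypothesis $u\approxin\Cnt[\infty]$ means, by the characterization recorded just after the definition of $\approxin\Cnt[\infty]$, that $\partial^\alpha u(x)\approxin\R$ for every $x\approxin\R^d$ and every $\alpha\in\N^d$. Hence Cor.~\ref{struct-Fin-ster-D} applies to $u$ and gives that $v:=\st u$ lies in $\Cnt[\infty]$ with $\partial^\alpha v=\st(\partial^\alpha u)$ for all $\alpha$; in particular $P(\partial)$ can now be applied to $v$ in the classical sense.

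The remaining step is a one-line pointwise computation. Fix $x\in\R^d$. Each $\partial^\alpha u(x)$ with $\abs\alpha\le m$ is finite (being infinitely close to $\partial^\alpha v(x)\in\R$), so, using that $\st$ is $\R$-linear on $\Fin(\ster\R)$,
\[
P(\partial)v(x)=\sum_{\abs\alpha\le m}c_\alpha\,\partial^\alpha v(x)=\sum_{\abs\alpha\le m}c_\alpha\,\st\bigl(\partial^\alpha u(x)\bigr)=\st\bigl(P(\partial)u(x)\bigr).
\]
By \eqref{eq-zwakke-oplossing-Cnt} we have $P(\partial)u(x)\approx f(x)$, and since $f(x)\in\R$ this forces $\st\bigl(P(\partial)u(x)\bigr)=f(x)$. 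As $x\in\R^d$ was arbitrary, $P(\partial)v=f$.

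I do not expect any real obstacle here: all of the analytic content --- that passing to standard parts commutes with the partial derivatives, so that $v$ is genuinely of class $\Cnt[\infty]$ --- has already been isolated in Thm.~\ref{struct-FinCk} and Cor.~\ref{struct-Fin-ster-D}. The only point one must not skip is checking that the values $\partial^\alpha u(x)$ are finite before invoking linearity of the standard part, but that is precisely what $u\approxin\Cnt[\infty]$ supplies.
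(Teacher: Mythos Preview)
Your proof is correct and follows exactly the paper's approach: apply Cor.~\ref{struct-Fin-ster-D} to commute $\st$ with the derivatives, then use linearity of $\st$ and equation~\eqref{eq-zwakke-oplossing-Cnt} to conclude $P(\partial)v=f$. The paper compresses this into the single line $P(\partial)(\st u)=\st P(\partial)u=f$, whereas you spell out the pointwise computation and the finiteness check explicitly, but there is no substantive difference.
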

\begin{proof}
By Cor.\ \ref{struct-Fin-ster-D}, \(P(\partial)(\st u) = \st P(\partial) u = f\).
\end{proof}
Combining this with Thm.\ \ref{lemma-fund-opl}, we find a solution to the equation (for $f\in\Cnt_c$) by means of a fundamental solution \(E\) of order 0, provided that \(E\conv f\approxin \Cnt[\infty]\). We now introduce a natural sufficient condition that guarantees this:
\begin{definition}\cite[\S 10.4]{SL}
\[
D' :=\Big\{f\in \ster{\Cnt[\infty]}: \int_{\ster\R^d} f\phi \approxin\R,\ \forall \phi\approxin \Cnt[\infty]_c\Big\}.
\]
For \(f\), \(g\in \ster{\Cnt}\) we write
\[
f\approx_{D'} g\quad \iff \quad \int f\phi\approx \int g\phi, \ \forall \phi\approxin \Cnt[\infty]_c.
\]
\end{definition}

It is appealing that we can apply typical techniques in nonstandard analysis also in this setting. E.g., the following is an analogue of Robinson's sequential lemma \cite[15.2]{Goldblatt}:
\begin{lemma}[Sequential lemma for \(\mathcal C^k\)]\label{rijenlemma-Ck}(cf.\ \cite[App.\ 2]{Oberguggenberger88})
Let \(k\in\N\cup\{\infty\}\). Let \((f_n)_{n\in\ster\N}\) be an internal hypersequence in \(\ster{\Cnt[k]}\). If \(f_n\approx_{\Cnt[k]} 0\) for each \(n\in\N\), then there exists \(\omega\in\ster\N_\infty\) such that \(f_n\approx_{\Cnt[k]} 0\) for each \(n\le \omega\) (\(n\in\ster\N\)).
\end{lemma}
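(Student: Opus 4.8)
The plan is to imitate the proof of Robinson's sequential lemma: I will build an internal subset of $\ster\N$ that contains every standard natural number and then apply overspill. The only genuinely new point is that ``$f_n\approx_{\Cnt[k]}0$'' is an external statement — it quantifies over all $x\approxin\R^d$, and, when $k=\infty$, over all $\alpha\in\N^d$ — so it must first be replaced by an internal property of $n$ in which these quantifiers are bounded by a running index.

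Set $K_N:=k$ if $k\in\N$ and $K_N:=N$ if $k=\infty$, and let
\[
A:=\Big\{N\in\ster\N: N\ge 1\text{ and }\abs{\partial^\alpha f_n(x)}\le 1/N\text{ for all }n\le N,\ \abs\alpha\le K_N,\ \abs x\le N\Big\},
\]
where the quantifiers range over $n\in\ster\N$, $\alpha\in\ster\N^d$, $x\in\ster\R^d$. Since $n\mapsto f_n$ is internal and $\partial^\alpha=\ster\partial^\alpha$, the set $A$ is internal. First I would check that $\N\subseteq A$. Fix a standard $N\ge 1$. Then $n$ runs through the finitely many standard naturals $\le N$, and $\alpha$ through the finitely many standard multi-indices with $\abs\alpha\le K_N$ — here it is essential that $K_N$ is standard, which is precisely why for $k=\infty$ I bound $\abs\alpha$ by the running index $N$ rather than leaving it unbounded. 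Moreover every $x$ with $\abs x\le N$ is finite, hence $x\approxin\R^d$. So for each such $n$ the hypothesis $f_n\approx_{\Cnt[k]}0$ yields $\partial^\alpha f_n(x)\approx 0$, and therefore $\abs{\partial^\alpha f_n(x)}\le 1/N$ because $1/N$ is a positive real; thus $N\in A$.

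Then by overspill there is an $\omega\in\ster\N_\infty$ with $\omega\in A$, and I claim this $\omega$ does the job. Indeed, let $n\le\omega$ ($n\in\ster\N$), let $\alpha\in\N^d$ with $\abs\alpha\le k$, and let $x\approxin\R^d$. Since $\abs\alpha$ is finite we have $\abs\alpha\le K_\omega$ (which is $k$ or $\omega$), and $\abs x\le r\le\omega$ for some $r\in\R$; hence $\omega\in A$ gives $\abs{\partial^\alpha f_n(x)}\le 1/\omega\approx 0$, i.e.\ $\partial^\alpha f_n(x)\approx 0$. As $n\le\omega$, $\alpha$ and $x$ were arbitrary, this says exactly that $f_n\approx_{\Cnt[k]}0$ for each $n\le\omega$.

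I do not expect a real obstacle; the argument is the standard overspill scheme and the work is entirely in the bookkeeping above. The one delicate step is the translation of ``$f_n\approx_{\Cnt[k]}0$'' into the internal condition defining $A$: it must be loose enough that the hypothesis still forces every standard $N$ into $A$ — which dictates the radius bound $\abs x\le N$, since only for standard $N$ does $\{x:\abs x\le N\}$ consist entirely of finite points — and tight enough that at an infinite $\omega\in A$ one recovers $\partial^\alpha f_n(x)\approx0$ for all finite $x$ and all standard $\alpha$, which is automatic once both bounds $\abs x\le N$ and $\abs\alpha\le K_N$ become infinite.
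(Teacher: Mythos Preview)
Your proof is correct and follows essentially the same overspill argument as the paper, which applies overspill to the internal set $\{n\in\ster\N:(\forall x,\abs x\le n)(\forall\alpha,\abs\alpha\le K_n)(\abs{\partial^\alpha f_n(x)}\le 1/n)\}$. The only cosmetic difference is that you fold the extra quantifier ``for all $n\le N$'' directly into the definition of $A$, whereas the paper leaves this implicit and relies on the standard fact that overspill on a set containing $\N$ yields an entire initial segment $[1,\omega]$.
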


\begin{proof}
By overspill on (for $k<\infty$)
\[
\N\subseteq \{n\in\ster\N: (\forall x\in\ster\R^d, \abs x\le n) (\forall \alpha\in\ster\N^d,\abs\alpha\le k) (\abs{\partial^\alpha f_n(x)}\le 1/n)\}
\]
resp.\ (for $k=\infty$)
\[
\N\subseteq \{n\in\ster\N: (\forall x\in\ster\R^d, \abs x\le n) (\forall \alpha\in\ster\N^d,\abs\alpha\le n) (\abs{\partial^\alpha f_n(x)}\le 1/n)\}.
\]
\end{proof}

\begin{lemma}\label{equiv-def-C'}
Let \(f\in\ster{\Cnt[\infty]}\).
\begin{enumerate}
\item \(f\approxin \Cnt\) \(\implies\) \(f\in D'\).
\item \(f\approx_{\Cnt} 0\) \(\implies\) \(f\approx_{D'} 0\).
\item
\(\displaystyle f\in D' \iff \int_{\ster\R^d} f\phi \approx 0, \ \forall \phi\approx_{\Cnt[\infty]_c} 0\).
\item If \(f\in D'\) and \(\alpha\in\N^d\), then \(\partial^\alpha f\in D'\).
\item If \(f\approx_{D'}0\) and \(\alpha\in\N^d\), then \(\partial^\alpha f\approx_{D'} 0\).
\item Let $f\in D'$ (resp.\ $f\approx_{D'} 0$) be S-compactly supported.\\Then
$\displaystyle \int_{\ster\R^d}f\phi\approxin\R$ (resp.\ $\approx 0$), $\forall \phi \approxin \Cnt[\infty]$, and \(\displaystyle \int_{\ster\R^d} f\phi \approx 0, \ \forall \phi\approx_{\Cnt[\infty]} 0\).
\end{enumerate}
\end{lemma}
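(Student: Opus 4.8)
The plan is to prove the six items essentially one at a time, using the Sequential Lemma for $\Cnt[\infty]$ (Lemma~\ref{rijenlemma-Ck}) for the heart of (3), and integration by parts plus multiplication by an infinite constant or a standard cutoff for the remaining items. I will use throughout that, by the overspill remark in the Preliminaries, an internal $\phi\approxin\Cnt[\infty]_c$ is S-compactly supported, say $\phi(x)=0$ for $|x|\ge R$ with $R\in\R$, and that on the $\ster$-compact ball $\{|x|\le R\}$ every point is near-standard, so that the $\ster$-maximum of $|g|$ over it, for an internal $g$, is attained at a near-standard point. For (1), write $f\approx_{\Cnt}g$ with $g\in\Cnt$ and $\phi\approx_{\Cnt[\infty]_c}\psi$ with $\psi\in\Cnt[\infty]_c$; then $\int f\phi=\int_{|x|\le R}f\phi$, and on that ball $|f|$ is bounded by $\sup_{|y|\le R}|g(y)|+1\in\R$ (the $\ster$-max being attained at a near-standard point, where $f\approx g$) and $|\phi|$ by $\sup_{|y|\le R}|\psi(y)|+1\in\R$, so the integral over a ball of finite $\ster$-measure is finite. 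For (2), $f\approx_{\Cnt}0$ forces the $\ster$-maximum of $|f|$ on $\{|x|\le R\}$ to be infinitesimal, whence $|\int f\phi|$ is bounded by a finite real times $\max_{|x|\le R}|f|$ times $\max_{|x|\le R}|\phi|$, hence infinitesimal.

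Item (3) is the core. For ``$\Leftarrow$'', if $\phi\approx_{\Cnt[\infty]_c}\psi$ with $\psi\in\Cnt[\infty]_c$ then $\phi-\psi\approx_{\Cnt[\infty]_c}0$, so $\int f\phi\approx\int f\psi$, and it remains to see $\int f\psi\approxin\R$: for every infinite $\omega$ one has $\psi/\omega\approx_{\Cnt[\infty]_c}0$ (the standard derivatives of $\psi$ are finite), hence $\tfrac1\omega\int f\psi\approx0$, so the internal set $\{\omega\in\ster\N:\omega\ge1,\ |\tfrac1\omega\int f\psi|\le1\}$ contains every infinite hypernatural, and by underspill it contains a finite one, making $\int f\psi$ finite. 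For ``$\Rightarrow$'', given $\phi\approx_{\Cnt[\infty]_c}0$, apply Lemma~\ref{rijenlemma-Ck} with $k=\infty$ to the internal hypersequence $f_n:=n\phi$, which satisfies $f_n\approx_{\Cnt[\infty]}0$ for each $n\in\N$, to obtain $\omega\in\ster\N_\infty$ with $\omega\phi\approx_{\Cnt[\infty]}0$; since $\omega\phi$ inherits the S-compact support of $\phi$, we get $\omega\phi\approxin\Cnt[\infty]_c$, so $f\in D'$ gives $\omega\int f\phi=\int f(\omega\phi)\approxin\R$, and multiplying by the infinitesimal $1/\omega$ yields $\int f\phi\approx0$.

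For (4) and (5), integrate by parts: by transfer of the standard identity for $\Cnt[1]$ functions tested against $\Cnt[1]_c$ functions, $\int(\partial^\alpha f)\phi=(-1)^{|\alpha|}\int f\,\partial^\alpha\phi$ whenever $\phi$ is $\ster$-compactly supported. The one point to check is that $\partial^\alpha\phi\approxin\Cnt[\infty]_c$ when $\phi\approxin\Cnt[\infty]_c$: an S-compactly supported internal function in $\ster{\Cnt[\infty]}$ vanishes identically on the internal open set $\{|x|>R\}$ for some $R\in\R$, hence so do all its derivatives, so the condition ``$\phi(x)=\psi(x)$ for $x\in\ster\R^d_\infty$'' passes to the $\partial^\alpha$'s, while the derivative conditions are immediate; then $f\in D'$ (resp.\ $f\approx_{D'}0$) applied to $\partial^\alpha\phi$ gives $\partial^\alpha f\in D'$ (resp.\ $\partial^\alpha f\approx_{D'}0$). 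For (6), fix $R\in\R$ with $f(x)=0$ for $|x|\ge R$ and a standard $\chi\in\Cnt[\infty]_c$ with $\chi\equiv1$ on $\{|x|\le R\}$, so that $f\phi=f(\phi\chi)$ for every $\phi$; if $\phi\approxin\Cnt[\infty]$ then $\phi\chi\approxin\Cnt[\infty]_c$ (the cutoff removes the growth at infinity, and the product rule, with each $\partial^\beta\chi$ finite, handles the derivatives), so $\int f\phi=\int f(\phi\chi)$ is finite, resp.\ infinitesimal; if instead $\phi\approx_{\Cnt[\infty]}0$ then $\phi\chi\approx_{\Cnt[\infty]_c}0$, so $\int f\phi\approx0$, using (3) in the case $f\in D'$ and the definition of $\approx_{D'}$ in the case $f\approx_{D'}0$.

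I expect the only genuine obstacle to be the forward direction of (3): one must upgrade the merely pointwise, finite-order information ``$\partial^\alpha\phi(x)\approx0$ for finite $x$ and $\alpha$'' to a single uniform infinitesimal bound valid up to an infinite radius and an infinite differentiation order, which is exactly what Lemma~\ref{rijenlemma-Ck} supplies (without it one would instead run overspill by hand on the internal function $n\mapsto\max\{|\partial^\alpha\phi(x)|:|x|\le n,\ |\alpha|\le n\}$). Everything else is routine bookkeeping with S-compact support, most cleanly phrased via vanishing on internal open neighbourhoods of infinity.
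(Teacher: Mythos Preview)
Your proof is correct and follows essentially the same approach as the paper: the same crude sup-bound for (1)--(2), the sequential lemma for (3)$\Rightarrow$, integration by parts for (4)--(5), and a standard cutoff for (6). The one place you take a slightly longer route is (3)$\Leftarrow$: you split $\phi=\psi+(\phi-\psi)$ with $\psi$ standard and then run underspill on $\psi/\omega$, whereas the paper observes directly that $\varepsilon\phi\approx_{\Cnt[\infty]_c}0$ for every infinitesimal $\varepsilon$, so $\varepsilon\int f\phi\approx 0$ for all $\varepsilon\approx 0$, which immediately forces $\int f\phi\approxin\R$ without any splitting or underspill.
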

\begin{proof}
1--2. Let \(\phi\approxin{\Cnt_c}\). Then \(\phi(x)=0\) for \(\abs x\ge R\) (some \(R\in\R\)), and thus
\[\abs[\bigg]{\int f\phi}\le \max_{\abs x\le R}\abs f \max_{\abs x\le R} \abs\phi\int_{\abs x\le R}1.\]
3. \(\Rightarrow\): Let \(\phi\approx_{\Cnt[\infty]_c} 0\). Then also \(n\phi\approx_{\Cnt[\infty]} 0\) for each \(n\in\N\). By the sequential lemma, it follows that also \(\omega\phi\approx_{\Cnt[\infty]} 0\) for some \(\omega\in\ster\N_\infty\), and thus \(\omega\phi\approxin \Cnt[\infty]_c\). By the assumptions, it follows that \(\omega\int_{\ster\R^d} f\phi\approxin\R\).

\(\Leftarrow\): Let \(\phi\approxin\Cnt[\infty]_c\). Then \(\varepsilon\phi\approx_{\Cnt[\infty]_c} 0\) for every \(\varepsilon\approx 0\). By the assumptions, it follows that \(\varepsilon\int_{\ster\R} f\phi\approx 0\) for every \(\varepsilon\approx 0\). Then \(\abs[\big]{\int_{\ster\R} f\phi}< \frac1\varepsilon\) for every \(\varepsilon\approx 0\), and thus \(\int_{\ster\R} f\phi\approxin \R\).\\
4--5. By integration by parts.\\
6. As \(f(x)=0\) for \(|x|\ge R\) (some \(R\in\R\)), we have $\int_{\ster\R^d} f\phi = \int_{\ster\R^d} f (\phi\psi)$, with $\psi\in\Cnt[\infty]_c$ with $\psi(x)=1$ if $|x|\le R$.
\end{proof}

\begin{theorem}\label{conv-in-Ckc'}\label{conv-in-D'}
Let $f,g\in\ster{\Cnt[\infty]}$, with at least one of them S-compactly supported.
\begin{enumerate}
\item If \(f\approxin\Cnt[\infty]\) and \(g\approx_{D'} 0\), then \(f\conv g\approx_{\Cnt[\infty]} 0\).
\item If \(f\approx_{\Cnt[\infty]} 0\) and \(g\in D'\), then \(f\conv g\approx_{\Cnt[\infty]} 0\).
\item If \(f\approxin\Cnt[\infty]\) and \(g\in D'\), then \(f\conv g\approxin \Cnt[\infty]\).
\item If \(f,g\in D'\), then \(f\conv g\in D'\).
\item If \(f\in D'\) and \(g\approx_{D'} 0\), then \(f\conv g\approx_{D'} 0\).
\end{enumerate}
\end{theorem}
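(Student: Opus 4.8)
The plan is to reduce all five statements to one mechanism: I would prove (1)--(3) simultaneously, and then bootstrap (4)--(5) from (3).

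First I would record two elementary facts. \emph{(i)} Since one of $f,g$ is $\ast$-compactly supported, $f\conv g$ is well defined and $\partial^\alpha(f\conv g)=(\partial^\alpha f)\conv g$ by transfer of the classical identity; moreover $\partial^\alpha f$ is S-compactly supported whenever $f$ is, and $\partial^\alpha f\approxin\Cnt[\infty]$ (resp.\ $\partial^\alpha f\approx_{\Cnt[\infty]}0$) whenever $f\approxin\Cnt[\infty]$ (resp.\ $f\approx_{\Cnt[\infty]}0$). \emph{(ii)} For $F\in\ster{\Cnt[\infty]}$ and $x\approxin\R^d$, the function $\phi_x\colon y\mapsto F(x-y)$ is S-compactly supported if $F$ is (as $x$ is finite); satisfies $\phi_x\approx_{\Cnt[\infty]}0$ if $F\approx_{\Cnt[\infty]}0$ (since $x-y\approxin\R^d$ for $y\approxin\R^d$); and satisfies $\phi_x\approxin\Cnt[\infty]$ if $F\approxin\Cnt[\infty]$, because if $F\approx_{\Cnt[\infty]}h$ with $h\in\Cnt[\infty]$, then $\partial_y^\alpha\phi_x(y)=(-1)^{\abs\alpha}(\partial^\alpha F)(x-y)\approx(-1)^{\abs\alpha}(\partial^\alpha h)(\st x-\st y)\approx\partial_y^\alpha\big(h(\st x-y)\big)$ for $y\approxin\R^d$, using the continuity of $\partial^\alpha h$, and $h(\st x-\cdot)\in\Cnt[\infty]$. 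In particular $\phi_x\approxin\Cnt[\infty]_c$ when $F\approxin\Cnt[\infty]$ is S-compactly supported, and $\phi_x\approx_{\Cnt[\infty]_c}0$ when $F\approx_{\Cnt[\infty]}0$ is S-compactly supported.

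For (1)--(3): by \emph{(i)} and the characterisation of $\approxin\Cnt[\infty]$ following Cor.~\ref{struct-Fin-ster-D}, it suffices to show, for every $\alpha$ and every $x\approxin\R^d$, that
$(\partial^\alpha f\conv g)(x)=\int_{\ster\R^d}g(y)\,(\partial^\alpha f)(x-y)\,dy=\int_{\ster\R^d}g\,\phi_x$
is $\approx 0$ in cases (1),(2) and $\approxin\R$ in case (3), where $F:=\partial^\alpha f$ inherits the hypothesis on $f$ by \emph{(i)}. Now I would split on which factor is S-compactly supported. If $F$ is, then $\phi_x\approxin\Cnt[\infty]_c$ in (1),(3) and $\phi_x\approx_{\Cnt[\infty]_c}0$ in (2) by \emph{(ii)}, and the conclusion is immediate from the definition of $\approx_{D'}$ in (1), from the definition of $D'$ in (3), and from Lemma~\ref{equiv-def-C'}(3) in (2). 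If instead $g$ is S-compactly supported, then $\phi_x\approxin\Cnt[\infty]$ in (1),(3) and $\phi_x\approx_{\Cnt[\infty]}0$ in (2), and the conclusion follows by applying Lemma~\ref{equiv-def-C'}(6) to the S-compactly supported $g$ (its first clause in (1),(3), its last clause in (2)).

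For (4)--(5): for $\phi\approxin\Cnt[\infty]_c$, transfer of Fubini's theorem (legitimate because the double integrand is $\ast$-continuous and supported in a $\ast$-bounded set, $\phi$ being $\ast$-compactly supported and one of $f,g$ S-compactly supported) gives
\[\int_{\ster\R^d}(f\conv g)\phi=\int_{\ster\R^d}g(y)\Big(\int_{\ster\R^d}f(x-y)\phi(x)\,dx\Big)\,dy=\int_{\ster\R^d}g\,\psi,\quad \psi(y):=(f\conv\check\phi)(-y),\]
where $\check\phi(x):=\phi(-x)$. Since $\check\phi\approxin\Cnt[\infty]_c$ (reflection preserves this), part (3) gives $f\conv\check\phi\approxin\Cnt[\infty]$, hence $\psi\approxin\Cnt[\infty]$ (reflection again). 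For (4): $f\conv g\in D'$ is symmetric in $f,g$ (as $f\conv g=g\conv f$), so I may assume $f$ is S-compactly supported; then $f\conv\check\phi$ is S-compactly supported (convolution of two S-compactly supported functions), so $\psi\approxin\Cnt[\infty]_c$, and $\int g\,\psi\approxin\R$ by the definition of $D'$. For (5): if $f$ is S-compactly supported the same argument gives $\psi\approxin\Cnt[\infty]_c$ and $\int g\,\psi\approx 0$ by the definition of $\approx_{D'}$; if instead $g$ is S-compactly supported, then $\psi\approxin\Cnt[\infty]$ already suffices, by Lemma~\ref{equiv-def-C'}(6) applied to the S-compactly supported $g$ with $g\approx_{D'}0$.

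The main obstacle is not any single computation but the bookkeeping: in each of the five parts one must keep straight which of $f,g$ carries the compact-support hypothesis and match it to the correct clause of Lemma~\ref{equiv-def-C'}---indeed, the whole purpose of Lemma~\ref{equiv-def-C'}(6) is to handle test functions that lie only in $\approxin\Cnt[\infty]$ rather than in $\approxin\Cnt[\infty]_c$. Secondary care is needed to justify the Fubini interchange in (4)--(5) through transfer, and to verify Fact \emph{(ii)}.
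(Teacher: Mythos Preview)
Your proposal is correct and follows essentially the same approach as the paper: for (1)--(3) you test $\partial^\alpha(f\conv g)(x)$ by writing it as $\int g\,\phi_x$ and invoking the definition of $D'$ (via Lemma~\ref{equiv-def-C'} when the compact support is on the $g$ side), and for (4)--(5) you bootstrap from (3). The only cosmetic difference is in (4)--(5): the paper packages your Fubini computation as the one-line associativity $(f\conv g)\conv\phi=f\conv(g\conv\phi)$ and then evaluates at $0$, applying (3) twice, whereas you unroll this explicitly into $\int(f\conv g)\phi=\int g\,\psi$ with $\psi=(f\conv\check\phi)(-\cdot)$; these are the same argument.
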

\begin{proof}
1--3. By transfer, \(f\conv g\in\ster{\Cnt[\infty]}\). Let \(f\approxin\Cnt[\infty]_c\) and \(g\approx_{D'} 0\). Let \(\alpha\in\N^d\) and \(x\approxin\R^d\). Since \(y\mapsto \partial^\alpha f(x-y)\approxin \Cnt[\infty]_c\), we have \(\partial^\alpha (f\conv g)(x)=((\partial^\alpha f)\conv g)(x)\approx 0\). The other cases are similar (using Lemma \ref{equiv-def-C'}).\\
4--5. Let \(f,g\in D'\), with $f$ S-compactly supported. If \(\phi\approxin \test\), then \((f\conv g)\conv \phi = f\conv (g\conv \phi)\approxin\Cnt[\infty]\), because \(g\conv\phi\approxin\Cnt[\infty]\). In particular, \(((f\conv g)\conv \phi)(0)\approxin\R\). The other cases are similar.
\end{proof}

\begin{theorem}\label{fund-opl-sterke-opl}
If \(E\in D'\) is a fundamental solution of order \(0\) for \(P(\partial)\) and \(f\in\Cnt[\infty]_c(\R^d)\), then \(v:=\st (E\conv f)\in\Cnt[\infty](\R^d)\) is a solution of \(P(\partial) v = f\).
\end{theorem}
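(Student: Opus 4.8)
The plan is to read this theorem as the synthesis of the two threads developed above. On the one hand, Theorem~\ref{lemma-fund-opl} already manufactures, from a fundamental solution of order $0$, a function satisfying the PDE up to infinitesimals in the sense of~\eqref{eq-zwakke-oplossing-Cnt}; on the other hand, Theorem~\ref{opl-op-inf-na-impliceert-opl} converts such a function into a genuine $\Cnt[\infty]$ solution, \emph{provided} it belongs to the regular class $\approxin\Cnt[\infty]$. So the whole content of the statement is that the extra hypothesis $E\in D'$ is exactly what forces that regularity, and this is delivered by the convolution theorem.

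Concretely, I would set $u:=E\conv f$. Since $f\in\Cnt[\infty]_c\subseteq\Cnt_c$, Theorem~\ref{lemma-fund-opl} applies directly and gives $u\in\ster{\Cnt[\infty]}$ with $P(\partial)u(x)\approx f(x)$ for all $x\approxin\R^d$, i.e.\ $u$ satisfies~\eqref{eq-zwakke-oplossing-Cnt}. Next I would check $u\approxin\Cnt[\infty]$: the standard function $f$ lies in $\Cnt[\infty]_c$, hence $f\approxin\Cnt[\infty]_c$ (take the witness $g=f$) and in particular $f$ is S-compactly supported, while $E\in D'$; thus Theorem~\ref{conv-in-D'}(3) yields $f\conv E\approxin\Cnt[\infty]$, and since the convolution product is commutative, $u=E\conv f=f\conv E\approxin\Cnt[\infty]$.

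With $f\in\Cnt$ and $u\approxin\Cnt[\infty]$ satisfying~\eqref{eq-zwakke-oplossing-Cnt}, Theorem~\ref{opl-op-inf-na-impliceert-opl} then gives that $v:=\st u=\st(E\conv f)\in\Cnt[\infty](\R^d)$ satisfies $P(\partial)v=f$, as claimed. There is no real obstacle here beyond the conceptual one: recognizing that $D'$-membership of $E$ is precisely the condition that, via Theorem~\ref{conv-in-D'}(3), pushes $E\conv f$ into the class where forming the standard part commutes with $P(\partial)$; once the convolution results of Theorem~\ref{conv-in-D'} are in hand, the proof is a short assembly of previous statements with no remaining computation.
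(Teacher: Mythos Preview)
Your proof is correct and follows exactly the paper's approach: apply Theorem~\ref{lemma-fund-opl} to get that $u=E\conv f$ satisfies~\eqref{eq-zwakke-oplossing-Cnt}, use Theorem~\ref{conv-in-D'}(3) with $f\approxin\Cnt[\infty]_c$ and $E\in D'$ to obtain $u\approxin\Cnt[\infty]$, and conclude via Theorem~\ref{opl-op-inf-na-impliceert-opl}. The only difference is that you spell out a few routine details (e.g.\ $f\approxin\Cnt[\infty]_c$ with witness $f$ itself, commutativity of convolution) that the paper leaves implicit.
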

\begin{proof}
By Thm.\ \ref{lemma-fund-opl}, \(E\conv f\) satisfies eq.\ \eqref{eq-zwakke-oplossing-Cnt}. By Thm.\ \ref{conv-in-D'}, \(E\conv f\approxin\Cnt[\infty]\), and thus the result follows by Thm.\ \ref{opl-op-inf-na-impliceert-opl}.
\end{proof}

\begin{definition}
Let \(\psi\in\Cnt[\infty]_c\) with \(\int\psi=1\). We call \(\psi_n(x):=n^d\psi(nx)\) for each \(n\in\ster\N\). Then we call \((\psi_n)_{n\in\ster\N}\) a model delta sequence.
\end{definition}
\begin{theorem}\label{D'-eltair}
Let \(k\in\N\cup\{\infty\}\). If \(f\approxin\Cnt[k]\) and \(f\approx_{D'} 0\), then \(f\approx_{\Cnt[k]}0\).
\end{theorem}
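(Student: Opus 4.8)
The plan is to reduce everything to showing that the standard part of \(f\) vanishes identically. By definition of \(f\approxin\Cnt[k]\) I may fix \(g\in\Cnt[k]\) with \(f\approx_{\Cnt[k]}g\), so that \(\partial^\alpha f(x)\approx\partial^\alpha g(x)\) for every \(x\approxin\R^d\) and every \(\alpha\in\N^d\) with \(\abs\alpha\le k\). If I can prove \(g\equiv 0\), then this reads \(\partial^\alpha f(x)\approx 0\) for all such \(x\) and \(\alpha\), which is exactly \(f\approx_{\Cnt[k]}0\). So the whole problem comes down to proving \(g\equiv 0\), and for that I would test \(g\) against standard test functions and invoke the fundamental lemma of the calculus of variations.

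So I would fix an arbitrary standard \(\phi\in\test\), say with \(\phi(x)=0\) for \(\abs x\ge R\), where \(R\in\R\), \(R>0\). Being standard, \(\phi\) trivially satisfies \(\phi\approxin\Cnt[\infty]_c\), so \(f\approx_{D'}0\) gives \(\int_{\ster\R^d}f\phi\approx 0\). The key step is then to show \(\int_{\ster\R^d}f\phi\approx\int_{\ster\R^d}g\phi\). Now \(f-g\in\ster\Cnt\), so by transfer of the extreme value theorem on the \(\ast\)-compact ball \(\{\abs x\le R\}\) the number \(M:=\max_{\abs{x}\le R}\abs{f(x)-g(x)}\) exists and is attained at some \(x_0\) with \(\abs{x_0}\le R\); since \(R\in\R\), \(x_0\in\Fin(\ster\R^d)\), i.e.\ \(x_0\approxin\R^d\), and hence \(M=\abs{f(x_0)-g(x_0)}\approx 0\). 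As \(\phi\) is supported in \(\{\abs x\le R\}\),
\[
\abs[\Big]{\int_{\ster\R^d}f\phi-\int_{\ster\R^d}g\phi}=\abs[\Big]{\int_{\abs{x}\le R}(f-g)\phi}\le M\,\max_{\abs{x}\le R}\abs\phi\int_{\abs{x}\le R}1\approx 0,
\]
the last two factors being finite real numbers. By transfer \(\int_{\ster\R^d}g\phi\) equals the standard real number \(\int_{\R^d}g\phi\), so \(\int_{\R^d}g\phi\approx 0\) forces \(\int_{\R^d}g\phi=0\).

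Since \(\phi\in\Cnt[\infty]_c(\R^d)\) was arbitrary, the fundamental lemma of the calculus of variations (du~Bois-Reymond) yields \(g=0\), which finishes the proof by the reduction in the first paragraph. (If one prefers not to leave the nonstandard setting, one can instead fix \(a\in\R^d\) and feed the vanishing of \(\int_{\R^d}g\phi\) the translated standard mollifiers \(\phi_n(x):=n^d\psi(n(a-x))\) with \(\psi\in\test\), \(\int\psi=1\), using \(\int_{\R^d}g\phi_n\to g(a)\).) I expect the only non-routine point to be the uniform estimate \(M\approx 0\): it is the same mechanism behind the proof of Lemma~\ref{equiv-def-C'}(2) and rests on the \(\ast\)-compactness of the closed ball of standard radius \(R\), which forces the supremum of the internal function \(\abs{f-g}\) on it to be attained, necessarily at a finite point, where \(f\) and \(g\) are infinitely close. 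Everything else is a routine unravelling of the definitions.
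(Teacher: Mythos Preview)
Your proof is correct, but it follows a different route from the paper's. The paper argues entirely inside the nonstandard framework: for the case \(k=0\) it convolves \(f\) with a model delta sequence \(\psi_n\), uses Theorem~\ref{conv-in-Ckc'} to see \(f\conv\psi_n\approx_{\Cnt[\infty]}0\) for every standard \(n\), applies the sequential lemma (Lemma~\ref{rijenlemma-Ck}) to push this to some infinite \(\omega\), and finally uses that \(\psi_\omega\) is a delta function of order \(0\) together with Lemma~\ref{conv-met-delta-orde-0} to conclude \(f\approx_{\Cnt}f\conv\psi_\omega\approx_{\Cnt}0\); the general \(k\) then follows by applying this to each \(\partial^\alpha f\). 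Your argument instead identifies the standard witness \(g\) of \(f\approxin\Cnt[k]\), shows (essentially reproving Lemma~\ref{equiv-def-C'}(2) for \(f-g\)) that \(\int g\phi=0\) for all standard \(\phi\in\test\), and invokes the classical du~Bois-Reymond lemma to get \(g=0\). This is more elementary and handles all \(k\) at once without the reduction to \(k=0\), but it steps outside the nonstandard development to cite a classical real-analysis fact; the paper's mollification-plus-overspill approach stays internal to the machinery already built and showcases the sequential lemma, at the cost of relying on the convolution results of Theorem~\ref{conv-in-Ckc'}.
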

\begin{proof}
Case \(k=0\): let \((\psi_n)_n\) be a model delta sequence. Then \(f\conv\psi_n\approx_{\Cnt[\infty]} 0\) for each \(n\in\N\) (Thm.\ \ref{conv-in-Ckc'}). By the sequential lemma this also holds for some \(\omega\in\ster\N_\infty\). Since \(\psi_\omega\) is a compactly supported delta function of order \(0\), we have \(f\approx_{\Cnt} f\conv\psi_\omega\approx_{\Cnt[\infty]} 0\) by Lemma \ref{conv-met-delta-orde-0}.\\
General case: if $f\approxin\Cnt[k]$, then by definition $\partial^\alpha f\approxin \Cnt$, \(|\alpha|\le k\). Apply the case \(k=0\) to \(\partial^\alpha f\).
\end{proof}

\begin{corollary}
Let $f\in D'$. If $f\approx_{D'} g$ for some $g\in \Cnt$, then $g$ is unique. We call $g=: \st f$ the standard part of $f$.
If moreover $f\approxin\Cnt$, then $\st f$ coincides with the standard part as defined in the Preliminaries.
\end{corollary}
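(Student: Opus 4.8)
The plan is to reduce both claims to the classical fundamental lemma of the calculus of variations (du~Bois-Reymond's lemma): if \(g\): \(\R^d\to\R\) is continuous and \(\int_{\R^d}g\phi=0\) for all \(\phi\in\test\), then \(g\equiv 0\). For \emph{uniqueness}, suppose \(g_1,g_2\in\Cnt\) both satisfy \(f\approx_{D'}g_i\). Chaining the two relations gives \(\int_{\ster\R^d}g_1\phi\approx\int_{\ster\R^d}g_2\phi\) for every \(\phi\approxin\Cnt[\infty]_c\), in particular for every standard \(\phi\in\test\). For such \(\phi\), both \(\int_{\ster\R^d}g_1\phi\) and \(\int_{\ster\R^d}g_2\phi\) are (\(\ast\)-extensions of) standard reals, so being infinitely close they are equal. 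Hence \(\int_{\R^d}(g_1-g_2)\phi=0\) for all \(\phi\in\test\), and since \(g_1-g_2\) is continuous, \(g_1=g_2\).

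For \emph{agreement with the standard part from the Preliminaries}, assume \(f\approxin\Cnt\) and let \(h\in\Cnt\) be its standard part in the earlier sense, i.e.\ \(f\approx_\Cnt h\). I will show \(f\approx_{D'}h\); since also \(f\approx_{D'}g\), the uniqueness just proved, applied to \(g\) and \(h\), then yields \(g=h\). To prove \(f\approx_{D'}h\), fix \(\phi\approxin\Cnt[\infty]_c\), say \(\phi\approx_{\Cnt[\infty]_c}\psi\) with \(\psi\in\Cnt[\infty]_c\). Being internal and vanishing on \(\ster\R^d_\infty\), \(\phi\) is S-compactly supported, say \(\phi(x)=0\) for \(|x|\ge R\) with \(R\in\R\); moreover \(\max_{|x|\le R}|\phi|\) is finite, since \(\phi(x)\approx\psi(x)\) with \(\psi\) standard and bounded. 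Also \(\max_{|x|\le R}|f-h|\approx 0\): by transfer of the extreme value theorem this maximum is attained at some \(x_0\) with \(|x_0|\le R\), which is finite, so \(f(x_0)\approx h(x_0)\) by \(f\approx_\Cnt h\). Therefore, just as in the proof of Lemma~\ref{equiv-def-C'}(1)--(2),
\[
\Bigl|\int_{\ster\R^d}f\phi-\int_{\ster\R^d}h\phi\Bigr|\le\max_{|x|\le R}|f-h|\cdot\max_{|x|\le R}|\phi|\cdot\int_{|x|\le R}1\approx 0,
\]
so \(f\approx_{D'}h\).

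The only genuinely nonstandard step, and the one I would take care over, is the estimate \(\max_{|x|\le R}|f-h|\approx 0\): one uses that this internal maximum over the \(\ast\)-compact ball is attained at a single point, necessarily finite, where \(f\) and \(h\) are infinitely close by hypothesis. One cannot simply invoke Lemma~\ref{equiv-def-C'}(2) for \(f-h\), since \(h\in\Cnt\) need not lie in \(\ster{\Cnt[\infty]}\); hence the short direct estimate above. Everything else is routine: the transfer-level fact that two infinitely close standard reals coincide, together with the classical du~Bois-Reymond lemma.
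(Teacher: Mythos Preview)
Your proof is correct, but it takes a different route from the one the paper intends. The corollary is placed immediately after Theorem~\ref{D'-eltair}, and the paper's implicit argument is simply to invoke that theorem: if $g_1,g_2\in\Cnt$ both satisfy $f\approx_{D'}g_i$, then $g_1-g_2\in\Cnt$ satisfies $g_1-g_2\approxin\Cnt$ and $g_1-g_2\approx_{D'}0$, so Theorem~\ref{D'-eltair} (case $k=0$) gives $g_1-g_2\approx_{\Cnt}0$; being standard, $g_1=g_2$. Likewise for the second part: $f\approxin\Cnt$ and $f-g\approx_{D'}0$ with $f-g\approxin\Cnt$, so Theorem~\ref{D'-eltair} yields $f\approx_{\Cnt}g$, i.e.\ $g$ is the Preliminaries standard part.

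The difference is that the paper's route stays entirely inside the nonstandard machinery already built (model delta sequences, the sequential lemma, Lemma~\ref{conv-met-delta-orde-0}), whereas you import the classical du~Bois-Reymond lemma for uniqueness and redo the estimate of Lemma~\ref{equiv-def-C'}(2) by hand for the second part. Your approach is more elementary in that it bypasses Theorem~\ref{D'-eltair} entirely, but it trades internal self-containment for an external classical fact. Your careful remark that Lemma~\ref{equiv-def-C'}(2) is stated only for $f\in\ster{\Cnt[\infty]}$ is well taken; the same caveat applies to the paper's use of Theorem~\ref{D'-eltair} on $g_1-g_2\in\Cnt$, though the proofs of both results go through verbatim for $f\in\ster\Cnt$.
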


\begin{remark}
By Thm.\ \ref{D'-eltair}, the relation \(\approx\) `fits itself' to the space to which the functions belong. This allows us to keep the notations light and in most cases drop the indices $_{D'}$ and $_{\Cnt[k]}$ to the relation $\approx$ without introducing ambiguity (for clarity, we will not do this in this survey).
\end{remark}

\section{Delta functions of higher order}
We have not yet recovered the full flexibility of distribution theory, because in concrete problems in analysis, more general delta functions occur:

\begin{theorem}(cf.\ \cite[Prop.\ 11.2]{Hoskins})
\label{delta-functie-algemener}
Let \(\psi\in\Cnt(\R)\) and \(\int_{-\infty}^{\infty} \psi = 1\) (as an improper Riemann integral). Let \(\rho\approx 0\), \(\rho>0\) and \(\delta(x):=\frac{1}{\rho}\psi(\frac x\rho)\). Then \(\int f\delta \approx f(0)\) for every \(f\approxin\Cnt[1]_c(\R)\).
\end{theorem}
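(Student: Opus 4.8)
The idea is to integrate by parts so as to move the derivative off $\delta$ onto $f$; this is why $f\approxin\Cnt[1]_c$ (and not merely $f\approxin\Cnt_c$) is the natural hypothesis, since the improper integral of $\psi$ may be only conditionally convergent, so $\int\abs\delta$ need not be finite and the estimate used in Thm.~\ref{model-delta} is unavailable. First I would fix a primitive of $\psi$. Since $\int_{-\infty}^\infty\psi$ converges, $\Psi(y):=\int_{-\infty}^y\psi$ is well defined; then $\Psi\in\Cnt[1](\R)$ with $\Psi'=\psi$, $\Psi$ is bounded (say $\abs\Psi\le C$ on $\R$, being continuous with finite limits at $\pm\infty$), $\lim_{y\to-\infty}\Psi(y)=0$ and $\lim_{y\to+\infty}\Psi(y)=1$. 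By transfer, $\Delta(x):=\Psi(x/\rho)$ defines $\Delta\in\ster{\Cnt[1]}$ with $\Delta'=\delta$ and $\abs\Delta\le C$. From $f\approxin\Cnt[1]_c$ we get $g\in\Cnt[1]_c(\R)$ with $f\approx_{\Cnt[1]_c}g$; in particular $f\in\ster{\Cnt[1]}$ is S-compactly supported, so $f(x)=0$ for $\abs x\ge R_0$ for some $R_0\in\R_{>0}$, and $\abs{f'(x)}\le M$ on $[-R_0,R_0]$ for some finite $M$ (since $f'(x)\approx g'(x)$ there and $g'\in\Cnt_c$ is bounded). As $f$ vanishes for $\abs x\ge R_0$, $\delta=\Delta'$ and $f(\pm R_0)=0$, transfer of integration by parts gives
\[\int_{\ster\R}f\delta=\int_{-R_0}^{R_0}f\Delta'=-\int_{-R_0}^{R_0}f'\Delta,\]
so it suffices to prove $-\int_{-R_0}^{R_0}f'\Delta\approx f(0)$.

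Next I would fix $c\approx 0$, $c>0$ with $c/\rho$ positive infinite (e.g.\ $c:=\sqrt\rho$). The crucial point is the \emph{uniform} estimates $\max_{c\le x\le R_0}\abs{\Delta(x)-1}\approx 0$ and $\max_{-R_0\le x\le -c}\abs{\Delta(x)}\approx 0$: given a standard $\varepsilon>0$, choose a standard $N$ with $\abs{\Psi(y)-1}<\varepsilon$ for $y\ge N$ and $\abs{\Psi(y)}<\varepsilon$ for $y\le -N$; since $x/\rho\ge c/\rho>N$ when $x\ge c$ (resp.\ $x/\rho\le -c/\rho<-N$ when $x\le -c$), transfer gives $\abs{\Delta(x)-1}<\varepsilon$ on $[c,R_0]$ (resp.\ $\abs{\Delta(x)}<\varepsilon$ on $[-R_0,-c]$), and $\varepsilon$ was arbitrary. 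Now split $\int_{-R_0}^{R_0}f'\Delta$ over $[-R_0,-c]$, $[-c,c]$, $[c,R_0]$. The middle piece is $\le 2cMC\approx 0$; the left piece is $\le R_0M\max_{[-R_0,-c]}\abs\Delta\approx 0$; and the right piece equals $\int_c^{R_0}f'+\int_c^{R_0}f'(\Delta-1)$, whose second term is $\le R_0M\max_{[c,R_0]}\abs{\Delta-1}\approx 0$ and whose first term is $f(R_0)-f(c)=-f(c)\approx -f(0)$ by S-continuity of $f$ at $0$. Hence $\int_{-R_0}^{R_0}f'\Delta\approx -f(0)$, and therefore $\int_{\ster\R}f\delta\approx f(0)$.

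The step I expect to be the main obstacle is the one just sketched: realizing that one should integrate by parts against the \emph{bounded} primitive $\Delta$ (a smeared-out Heaviside function, $\approx 1$ for appreciable positive arguments and $\approx 0$ for appreciable negative ones), and then upgrading the pointwise behaviour of $\Delta$ to uniform estimates on $[c,R_0]$ and $[-R_0,-c]$ for a suitable infinitesimal cut-off $c$ with $c/\rho$ infinite. The remaining work is routine bookkeeping with finite bounds and infinitesimals.
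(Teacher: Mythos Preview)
Your proof is correct and follows essentially the same route as the paper's: integrate by parts against the bounded primitive $\Delta(x)=\Psi(x/\rho)$ (the paper's $H$), choose an infinitesimal cut-off $c$ with $c/\rho$ infinite (the paper takes $c=R\rho$ with $R=1/\sqrt\rho$), and split $[-R_0,R_0]$ into three pieces, using that $\Delta\approx 0$ uniformly on the left piece, $\Delta\approx 1$ uniformly on the right piece, and the middle piece has infinitesimal length. Your write-up is a bit more explicit about why $\abs{f'}$ is finitely bounded and about the uniform (rather than pointwise) estimates on $\Delta$, but the argument is the same.
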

\begin{proof}
Consider the antiderivative \(H(x):= \int_{-\infty}^x \delta(t)\,dt\). As \(\phi(x):= \int_{-\infty}^x\psi(t)\,dt\in\Cnt(\R)\) with \(\lim_{x\to\infty} \phi(x) =1\) and \(\lim_{x\to-\infty}\phi(x)=0\), we have \(\abs{H(x)}=\abs{\phi(x/\rho)}\approxin\R\), \(\forall x\in\ster\R\) and \(H(x)\approx 0\) if \(x\le -R\rho\) and \(H(x)\approx 1\) if \(x\ge R\rho\), for \(R\in\ster\R_\infty\), \(R>0\). Now choose in particular \(R\in\ster\R_\infty\) such that \(R\rho\approx 0\) (e.g., \(R:= 1/\sqrt{\rho}\)). If  \(f(x)=0\) for \(|x|\ge M\) (\(M\in\R\)), then
\[\int\limits_{\ster\R} f\delta = -\int\limits_{-M}^M f' H = -\underbrace{\int\limits_{-M}^{-R\rho} f' H}_{\approx 0} -\underbrace{\int\limits_{-R\rho}^{R\rho} f' H}_{\approx 0} - \int\limits_{R\rho}^M f' H\approx -\int\limits_{R\rho}^M f'= f(R\rho)\approx f(0)\]
since
\[
\abs{\int_{R\rho}^M f' H - \int_{R\rho}^M f'}\le M\max_{R\rho\le x\le M}\abs{H(x)-1}\max_{0\le x\le M}\abs{f'}\approx 0.
\]
\end{proof}

\begin{example}\label{Dirichlet-kern} (cf.\ \cite[\S 5.1]{Laugwitz89})
Let \(\psi(x):= \frac{\sin x}{\pi x}\). For \(\lambda\in\ster\R_\infty\), \(\lambda>0\), \(\delta(x):= \lambda \psi(\lambda x) = \frac{\sin \lambda x}{\pi x}\) is a delta function in the sense of Thm.\ \ref{delta-functie-algemener}. Thus \((f\conv \delta) (x)\approx f(x)\) for each \(f\approxin\Cnt[1]_c(\R)\) and for each \(x\approxin\R\). As
\[
\frac{1}{2\pi}\int_{-\lambda}^\lambda e^{i\omega(x-t)}d\omega = \frac{\sin\lambda(x-t)}{\pi(x-t)}=\delta(x-t)
\]
we obtain the following Fourier inversion formula for $f\in\Cnt[1]_c(\R)$, denoting the Fourier transform \(\fourier f(\omega):= \frac{1}{\sqrt{2\pi}}\int_\R f(t)e^{-i\omega t}\,dt\):
\[
f(x) \approx \int_{\ster\R} f(t)\delta(x-t)\,dt = \frac1{2\pi}\int_{\ster\R}\int_{-\lambda}^\lambda f(t)e^{i\omega(x- t)}\,dt\,d\omega
=\frac1{\sqrt{2\pi}}\int_{-\lambda}^\lambda \fourier f(\omega)e^{i\omega x}\,d\omega
\]
for any $x\in\R$ and \(\lambda\in\ster\R_\infty\), \(\lambda>0\). Thus \(f(x)=\frac1{\sqrt{2\pi}}\lim_{\lambda\to\infty}\int_{-\lambda}^\lambda \fourier f(\omega)e^{i\omega x}\,d\omega\).\\(The condition on the support of $f$ can of course be weakened.)
\end{example}

We thus define:
\begin{definition}
If $\delta\in\ster{\Cnt[\infty]}$ and $\int f \delta\approx f(0)$ for each $f\approxin\Cnt[\infty]_c$, then we call $\delta$ a (nonstandard) delta function.\\
$E\in\ster{\Cnt[\infty]}$ is a fundamental solution for $P(\partial)$ if $P(\partial) E$ is a delta function.\\
$u\in\ster{\Cnt[\infty]}$ is a weak solution of $P(\partial) u = f$ if $P(\partial) u\approx_{D'} f$. 
\end{definition}

As an extension of Lemma \ref{conv-met-delta-orde-0}, we have:
\begin{lemma}\label{delta-conv-algemeen}\label{delta-conv}
If \(\delta\) is a delta function and \(f\in D'\), and one of them is S-compactly supported, then \(f\conv \delta\approx_{D'} f\).
\end{lemma}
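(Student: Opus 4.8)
The statement to prove is Lemma~\ref{delta-conv}: if $\delta$ is a delta function, $f\in D'$, and one of them is S-compactly supported, then $f\conv\delta\approx_{D'} f$. By definition of $\approx_{D'}$, I must show $\int (f\conv\delta)\phi\approx\int f\phi$ for every $\phi\approxin\Cnt[\infty]_c$. The plan is to move the convolution onto the test function: using Fubini (valid by transfer, since integrability is a first-order property and one of $f,\delta,\phi$ is S-compactly supported while the others are in $\ster{\Cnt[\infty]}$), one has $\int (f\conv\delta)(x)\phi(x)\,dx = \int f(y)\bigl(\delta\conv\check\phi\bigr)(y)\,dy$ where $\check\phi(x):=\phi(-x)$, or more symmetrically $\int\!\!\int f(y)\delta(x-y)\phi(x)\,dx\,dy = \int f(y)\,\psi(y)\,dy$ with $\psi(y):=\int\delta(x-y)\phi(x)\,dx = (\delta\conv\check\phi)(-y)$ evaluated appropriately. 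The key point is then that $\psi\approx_{\Cnt[\infty]_c}\phi$ (or $\approx_{\Cnt[\infty]}\phi$, which suffices after pairing with $f\in D'$), so that $\int f\psi\approx\int f\phi$.

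First I would reduce to the case that makes the bookkeeping cleanest, splitting on which of $\delta$, $f$ is S-compactly supported. If $\delta$ is S-compactly supported: then $\delta\conv\check\phi$ makes sense for $\phi\approxin\Cnt[\infty]_c$, and I claim $\delta\conv\check\phi\approx_{\Cnt[\infty]}\check\phi$. This is exactly (a reflected version of) Lemma~\ref{conv-met-delta-orde-0} for higher-order delta functions: since $\delta$ is a delta function, for each $\alpha$ and each $x\approxin\R^d$, $\partial^\alpha(\delta\conv\check\phi)(x) = \int\delta(y)\,\partial^\alpha\check\phi(x-y)\,dy = \bigl(\delta\conv\widecheck{(\partial^\alpha\phi)^{\vee\vee}}\bigr)$-type expression $\approx\partial^\alpha\check\phi(x)$, because $z\mapsto\partial^\alpha\check\phi(x-z)$ is an element of $\ster{\Cnt[\infty]}$ that is $\approxin\Cnt[\infty]_c$ (translate/reflect of $\phi$), and the defining property of a delta function applies. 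Composing with $f\in D'$: $\int (f\conv\delta)\phi = \int f\,(\delta\conv\check\phi)^{\vee}\approx\int f\,\phi$ since $(\delta\conv\check\phi)^{\vee}\approx_{\Cnt[\infty]_c}\phi$ and $f\in D'$ means $f$ "tests" to a finite number against $\Cnt[\infty]_c$-test functions, hence to $\approx 0$ against $\approx_{\Cnt[\infty]_c}0$ ones by Lemma~\ref{equiv-def-C'}(3). If instead $f$ is S-compactly supported: then $f\in D'$ together with S-compact support gives, by Lemma~\ref{equiv-def-C'}(6), that $\int f\eta\approxin\R$ for all $\eta\approxin\Cnt[\infty]$ and $\int f\eta\approx 0$ for all $\eta\approx_{\Cnt[\infty]}0$; now $\delta$ need not be S-compactly supported, but $\phi$ is, so $\delta\conv\check\phi\in\ster{\Cnt[\infty]}$ is well-defined and $\delta\conv\check\phi\approx_{\Cnt[\infty]}\check\phi$ by the same delta-function computation, and S-compact support of $f$ lets me pair against a merely-$\approx_{\Cnt[\infty]}$ estimate.

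The main obstacle I anticipate is the interchange of convolution with the pairing and with $\partial^\alpha$, and keeping track of which "$\approxin$" class the auxiliary function $\delta\conv\check\phi$ lands in. Specifically: to invoke the defining property "$\int h\delta\approx h(0)$ for $h\approxin\Cnt[\infty]_c$" (or the reflected/translated variant) I need, for fixed $x\approxin\R^d$ and fixed $\alpha$, that the internal function $y\mapsto\partial^\alpha\check\phi(x-y)$ is $\approxin\Cnt[\infty]_c$ — this holds because $\phi\approxin\Cnt[\infty]_c$ means $\phi\approx_{\Cnt[\infty]_c}\phi_0$ for some $\phi_0\in\Cnt[\infty]_c$, and then $y\mapsto\partial^\alpha\check\phi(x-y)$ is $\approx_{\Cnt[\infty]_c}$ to $y\mapsto\partial^\alpha\check\phi_0(\st x - y)$, a genuine $\Cnt[\infty]_c$ function; but one must check the S-compact support survives (it does, since $\phi$'s S-support is bounded by a real and $x$ is finite) and that the $\approx$ is uniform in the relevant sense over $x\approxin\R^d$, which requires a brief S-continuity argument exactly parallel to the proof of Lemma~\ref{conv-met-delta-orde-0}. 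Once $\delta\conv\check\phi\approx_{\Cnt[\infty]}\check\phi$ (resp.\ $\approx_{\Cnt[\infty]_c}$) is established, the conclusion is immediate from the definitions and Lemma~\ref{equiv-def-C'}. Everything else — Fubini, integration by parts to move $\partial^\alpha$ across $\conv$ — is routine via transfer and uses only the stated S-compact support hypothesis to guarantee convergence.
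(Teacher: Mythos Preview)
Your argument is correct and follows essentially the same route as the paper: both reduce the claim to showing $\delta\conv\phi\approx_{\Cnt[\infty]}\phi$ for $\phi\approxin\Cnt[\infty]_c$ (your $\psi\approx\phi$ step), and then pair the difference against $f\in D'$. The paper packages the second step more economically by invoking Theorem~\ref{conv-in-D'} (writing $(f\conv\delta)\conv\phi=f\conv(\delta\conv\phi)\approx_{\Cnt[\infty]}f\conv\phi$ and evaluating at $0$) rather than unwinding the Fubini computation and citing Lemma~\ref{equiv-def-C'} directly, but the content is the same.
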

\begin{proof}
Assume first \(f\approxin\Cnt[\infty]\). As in the proof of Lemmas \ref{delta-orde-0-support} and \ref{conv-met-delta-orde-0}, \(f\conv\delta \approx_{\Cnt[\infty]} f\).\\
Now let \(f\in D'\) be arbitrary. Let \(\phi\approxin\Cnt[\infty]_c\). As \(\delta\conv\phi\approx_{\Cnt[\infty]} \phi\), we have \((f\conv\delta)\conv\phi=f\conv(\delta\conv\phi)\approx_{\Cnt[\infty]} f\conv\phi\) by Thm.\ \ref{conv-in-D'}. In particular, \((f\conv\delta)\conv\phi(0)\approx f\conv\phi(0)\).
\end{proof}

\begin{corollary}\label{gevolg-fund-opl}
If \(E\) is a fundamental solution for \(P(\partial)\) and \(f\in D'\) is S-compactly supported, then \(E\conv f\) is a weak solution for \(P(\partial) u=f\).
\end{corollary}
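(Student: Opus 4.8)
The plan is to reduce the statement to Lemma~\ref{delta-conv}, together with the standard fact that a constant-coefficient differential operator commutes with convolution. First I would check that $u := E\conv f$ makes sense: since $f$ is S-compactly supported it is in particular $\ast$-compactly supported, hence $f\in\ster{\Cnt[\infty]_c}$, and by transfer of the classical statement that $g\conv h\in\Cnt[\infty]$ whenever $g\in\Cnt[\infty]$ and $h\in\Cnt[\infty]_c$, the convolution $u = E\conv f$ is well defined and lies in $\ster{\Cnt[\infty]}$. This is exactly the ambient space in the definition of weak solution, so it remains only to verify $P(\partial)u\approx_{D'} f$.

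Next I would push the operator through the convolution. By transfer of the classical identity $\partial^\alpha(g\conv h) = (\partial^\alpha g)\conv h$ (valid when $h\in\Cnt[\infty]_c$, which $f$ is, up to the $\ast$/S distinction) and linearity in the coefficients $c_\alpha$, one obtains $P(\partial)(E\conv f) = (P(\partial)E)\conv f$. Writing $\delta := P(\partial)E$, the hypothesis that $E$ is a fundamental solution for $P(\partial)$ says precisely that $\delta$ is a delta function.

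Then I would invoke Lemma~\ref{delta-conv} with this $\delta$ and with the given $f\in D'$: since $f$ is S-compactly supported, the hypothesis ``one of them is S-compactly supported'' is met, so $f\conv\delta\approx_{D'} f$. By commutativity of $\conv$ this is the same as $\delta\conv f = P(\partial)(E\conv f)\approx_{D'} f$, i.e.\ $u = E\conv f$ is a weak solution of $P(\partial)u = f$, which is what we wanted.

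As for the main obstacle: there is essentially none of substance, as the corollary is a repackaging of Lemma~\ref{delta-conv}. The only points needing a word of care are bookkeeping ones: ensuring that the convolution and the interchange $P(\partial)(E\conv f) = (P(\partial)E)\conv f$ are legitimate (both handled by transfer from the classical situation, using that $f$ is $\ast$-compactly supported), and noting that the S-compact-support hypothesis of Lemma~\ref{delta-conv} is supplied by $f$ and not required of $P(\partial)E$, over which we have no control.
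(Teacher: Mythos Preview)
Your proposal is correct and follows essentially the same route as the paper, which simply says the proof is ``similar to the proof of Thm.~\ref{lemma-fund-opl} (now by Lemma~\ref{delta-conv-algemeen} instead of Lemma~\ref{conv-met-delta-orde-0})''. You have spelled out exactly that argument---well-definedness of $E\conv f$, the interchange $P(\partial)(E\conv f)=(P(\partial)E)\conv f$ by transfer, and the appeal to Lemma~\ref{delta-conv} using that $f$ is S-compactly supported---with the same reliance on the key lemma.
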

\begin{proof}
Similar to the proof of Thm.\ \ref{lemma-fund-opl} (now by Lemma \ref{delta-conv-algemeen} instead of Lemma \ref{conv-met-delta-orde-0}).
\end{proof}

\begin{theorem}\label{main-thm}
If \(P(\partial)\) is a PDO with fundamental solution \(E\in D'\) and \(f\in\Cnt[\infty]_c\), then \(v:=\st (E\conv f)\in\Cnt[\infty]\) is a solution of \(P(\partial) v = f\).
\end{theorem}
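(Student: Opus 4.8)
The plan is to follow the proof of Theorem~\ref{fund-opl-sterke-opl} (the order-$0$ case), inserting one extra step to compensate for the fact that a general fundamental solution only produces a weak solution---an $\approx_{D'}$ statement---rather than the pointwise relation \eqref{eq-zwakke-oplossing-Cnt} required by Theorem~\ref{opl-op-inf-na-impliceert-opl}.

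First I would record the basic memberships of $f$. Since $f\in\Cnt[\infty]_c$ is standard, it is S-compactly supported, trivially $f\approxin\Cnt[\infty]$, and $f\in D'$ by Lemma~\ref{equiv-def-C'}(1). Because $E\in D'$ and $f$ is S-compactly supported, Theorem~\ref{conv-in-D'}(3) gives $u:=E\conv f\approxin\Cnt[\infty]$; in particular $v:=\st u\in\Cnt[\infty]$ is well defined. On the other hand, $E$ being a fundamental solution for $P(\partial)$ and $f\in D'$ being S-compactly supported, Corollary~\ref{gevolg-fund-opl} says $u$ is a weak solution of $P(\partial)u=f$, i.e.\ $P(\partial)u\approx_{D'}f$, equivalently $P(\partial)u-f\approx_{D'}0$.

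The key step is to upgrade this $\approx_{D'}$ statement to eq.~\eqref{eq-zwakke-oplossing-Cnt}. From $u\approx_{\Cnt[\infty]}g$ with $g\in\Cnt[\infty]$ one gets $\partial^\alpha u\approx_{\Cnt[\infty]}\partial^\alpha g$ for every $\alpha$, and taking the finite linear combination defining $P(\partial)$ yields $P(\partial)u\approx_{\Cnt[\infty]}P(\partial)g\in\Cnt[\infty]$; hence $P(\partial)u-f\approxin\Cnt[\infty]$, so in particular $P(\partial)u-f\approxin\Cnt$. Theorem~\ref{D'-eltair} with $k=0$ then turns $P(\partial)u-f\approx_{D'}0$ into $P(\partial)u-f\approx_{\Cnt}0$, which is exactly $P(\partial)u(x)\approx f(x)$ for all $x\approxin\R^d$, i.e.\ \eqref{eq-zwakke-oplossing-Cnt}. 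Finally, since $f\in\Cnt$ and $u\approxin\Cnt[\infty]$ satisfies \eqref{eq-zwakke-oplossing-Cnt}, Theorem~\ref{opl-op-inf-na-impliceert-opl} gives $v=\st u\in\Cnt[\infty]$ with $P(\partial)v=f$.

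I expect the only real subtlety to be the bookkeeping in that middle step: verifying that $\approxin\Cnt[\infty]$ (and $\approx_{D'}$) behave linearly and are stable under $\partial^\alpha$, so that $P(\partial)u-f$ lands in $\approxin\Cnt$ while remaining $\approx_{D'}0$, which is precisely the configuration Theorem~\ref{D'-eltair} needs. Everything else is a direct invocation of results already proved (Theorem~\ref{conv-in-D'}, Corollary~\ref{gevolg-fund-opl}, Theorem~\ref{D'-eltair}, Theorem~\ref{opl-op-inf-na-impliceert-opl}).
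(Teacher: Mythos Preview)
Your proof is correct. The paper's proof is a single line (``Similar to the proof of Thm.~\ref{fund-opl-sterke-opl}''), and what it presumably has in mind is slightly more direct than your route: since $f\in\Cnt[\infty]_c$, the first case in the proof of Lemma~\ref{delta-conv-algemeen} (where $f\approxin\Cnt[\infty]$) already yields $P(\partial)u=(P(\partial)E)\conv f\approx_{\Cnt[\infty]} f$, which is stronger than eq.~\eqref{eq-zwakke-oplossing-Cnt} and lets one invoke Thm.~\ref{opl-op-inf-na-impliceert-opl} immediately, bypassing Thm.~\ref{D'-eltair} altogether.

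Your detour through Corollary~\ref{gevolg-fund-opl} (which only gives $\approx_{D'}$) followed by the upgrade via Thm.~\ref{D'-eltair} is a perfectly valid alternative, and has the merit of citing only results as stated rather than reaching into the proof of Lemma~\ref{delta-conv-algemeen} for a stronger intermediate conclusion. The bookkeeping you flag---that $\approxin\Cnt[\infty]$ is stable under $\partial^\alpha$ and finite linear combinations, so $P(\partial)u-f\approxin\Cnt$---is routine and goes through exactly as you say.
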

\begin{proof}
Similar to the proof of Thm.\ \ref{fund-opl-sterke-opl}.
\end{proof}

\section{Concluding remarks}
We can now work within this framework in the same way as in distribution theory: any operation on $D'$ which is independent of the representative modulo $\approx_{D'}$ gives rise to a distributional operation. Notice that in the nonstandard framework we are not constrained to distributional operations: all operations defined on functions in $\Cnt[\infty]$ are also defined on $\ster{\Cnt[\infty]}$---but the result of the operation may leave the space $D'$ or may depend on the representative modulo $\approx_{D'}$.

Although one can completely develop distribution theory in $D'$, it eventually becomes desirable to have the delta function (and other distributions) represented by one object (instead of infinitely many nonstandard functions which are $\approx_{D'}$-close to each other). To obtain this, one can simply consider equivalence classes modulo $\approx_{D'}$. The resulting space $D'/_{\approx_{D'}}$ is isomorphic to Schwartz's space ${\mathcal D}'$ \cite[\S 2]{Oberguggenberger88}. The situation is similar to developing elementary calculus in $\ster\Q$: it goes a long way (cf.\ \cite[Ch.\ 2]{SL}), but eventually, it becomes desirable to attach a single symbol to an irrational real number, and to pass to the quotient space $\Fin(\ster\Q)/_{\approx}$ which is isomorphic to $\R$.

\end{document}